\numberwithin{equation}{section}
\theoremstyle{plain}
\newtheorem{thm}{Theorem}[section]
\newtheorem{lemma}[thm]{Lemma}
\newtheorem{definition}[thm]{Definition}
\newtheorem{corollary}[thm]{Corollary}
\newcommand{\ind}{1\hspace{-0,9ex}1}
\DeclareMathOperator{\Var}{Var}
\DeclareMathOperator{\E}{E}
\DeclareMathOperator{\tr}{tr}
\DeclareMathOperator{\rank}{rank}
\def\d{\mathrm{d}}
\def\E{\mathbb{E}}
\def\N{\mathbb{N}}
\def\R{\mathbb{R}}
\def\N{\mathbb{N}}
\def\P{\mathbb{P}}
\newenvironment{theorem}{\begin{thm}}{\end{thm}}
\begin{document}

\begin{frontmatter}

	\title{Weak convergence of the empirical spectral distribution of ultra-high-dimensional banded sample covariance matrices}
	\runtitle{Banded sample covariance matrices}
\thankstext{T1}{Supported by the DFG Research Unit 1735, RO 3766/3-1 and DE 502/26-2.}
	\begin{aug}
		\author{\fnms{Kamil} \snm{Jurczak}\ead[label=e1]{kamil.jurczak@ruhr-uni-bochum.de}}

		\affiliation{Ruhr-Universit\"at Bochum}

		\address{Fakult\"at f\"ur Mathematik \\
		Ruhr-Universit\"at Bochum \\
		44780 Bochum \\
		Germany \\
		\printead{e1}\\
		}
	\end{aug}

\begin{abstract}In this article we investigate high-dimensional banded sample covariance matrices under the regime that the sample size $n$, the dimension $p$ and the bandwidth $d$ tend simultaneously to infinity such that $$n/p\to 0 \ \ \text{and} \ \ 2d/n\to y>0.$$ It is shown that the empirical spectral distribution of those matrices almost surely converges weakly to some deterministic probability measure which is characterized by its moments. Certain restricted compositions of natural numbers play a crucial role in the evaluation of the expected moments of the empirical spectral distribution. 
\end{abstract}

\end{frontmatter}

\small

\section{Introduction}
In statistics, high-dimensional sparse sample covariance matrices naturally occur as regularized estimators of population covariance matrices in high dimensions provided most entries are known to be zero or close to zero, cf. \cite{Bickel2008b}, \cite{Levina2012}. Statistical properties of these type of estimators had been intensively studied in recent years. Let us just mention some few crucial contributions. \cite{Karoui2008} provided a consistent estimate under the spectral norm for certain sparse sample covariance matrices based on thresholding. \cite{Lam2009} studied the rate of convergence of estimators  for sparse covariance matrices and precision matrices based on penalized likelihood. \cite{Caib} determined the minimax rates for sparse covariance matrix estimation under various matrix norm losses over appropriate classes of covariance matrices. As a special case of sparse covariance matrices arise banded covariance matrices. For the latter, it is a priori known  that the non-zero entries do not lie too far from the diagonal. \cite{Bickel2008} investigated a regularized estimator for banded covariance matrices and its rate of convergence. \cite{Qiu2012} proposed a test for bandedness.\\   
Apart from statistics, sparse sample covariance matrices are applicable in models of physical systems, where most particles do not interact with each other, see \cite{Bai2007}. Despite this rich occurrence, there is not much known about the spectral properties of high-dimensional sparse sample covariance matrices as compared to the classical high-dimensional sample covariance matrices. Under some slight regularity assumptions \cite{Bai2007} have proved that the empirical spectral distribution of
\begin{align*}
\frac{1}{\sqrt{nd}}(\boldsymbol X_n^{~}\boldsymbol X_n'-\sigma^2n\boldsymbol 1_{p\times p})\circ \boldsymbol D_p \in \R^{p\times p}
\end{align*}
converges to the semicircular law as $d/n\to 0$ and $p,d,n\to \infty$, where 
the entries of $\boldsymbol X_n=(X_{ik,n})_{ik}\in\R^{p\times n}$ are independent, centered random variables with variance $\sigma^2>0$, the symmetric matrix $\boldsymbol D_p=(D_{ij,p})\in\R^{p\times p}$ is independent of $\boldsymbol X_n$ with $\sum_{i=1}^p \E|D_{ij,p}|^2=d+o(d)$, and $\circ$ denotes the Hadamard product. In particular, the case, that $\boldsymbol D_p$ is a deterministic $0$-$1$-sparsity mask with $d$ non-zero entries per column, is covered by this model. The assumption $d/n\to 0$ is crucial for their result. On the contrary, an intrinsic consequence of the investigation in this article is that for $d/n\to y>0$ the limiting spectral distribution of a sparse sample covariance - if existent - does essentially depend on the structure of the sparsity mask through the number of certain restricted compositions of natural numbers. However, the focus in this article lies on the special case of banded sample covariance matrices. For those we prove that their sequence of empirical spectral distributions almost surely converges weakly, where the limiting distribution is described by its moments. \\
In contrast to banded or sparse sample covariance matrices, Wigner matrices with an additional sparsity structure have been extensively studied. Let us just mention a few contributions.  \cite{Bogachev1991} proved under slight regularity conditions that the empirical spectral distribution of sparse Wigner matrices converges weakly to the semi-circular law. \cite{Benaych2013} showed that its largest eigenvalue converges to $2$ in probability and that eigenvectors corresponding to eigenvalues far enough from zero are delocalized if the number of non-zero entries per row is of larger order than $(\log N)^{6(1+\alpha)}$, where $N$ is the number rows of the random matrix and the parameter $\alpha$ depends on the tails of the underlying distribution. Further, \cite{Benaych2014} studied localization and delocalization of eigenvectors for heavy-tailed band Wigner matrices. In an extraordinary article \cite{Sodin2010} investigated the limiting distribution of the smallest and largest eigenvalues of band Wigner matrices.\\
The article is structured as follows. In the rest of this section we introduce the basic notation, recall some useful results,  and summarize the method of moments. In Section \ref{section: combinatorial} we compile some combinatorial tools to evaluate the expected moments of the spectral distribution of banded sample covariance matrices. The concept of ordered trees with a $d$-band structure on the $I$-line is introduced, and an expansion for the number of these so-called $d$-banded ordered trees with a fixed number of vertices is given by means of restricted compositions of natural numbers. Finally, Section \ref{section: main} is devoted to the main result concerning the almost sure weak convergence of the spectral distribution of banded sample covariance matrices and its proof.

\subsection{Preliminaries}
We denote the ordered eigenvalues of a symmetric matrix \linebreak $\boldsymbol A\in\R^{p\times p}$ by $\lambda_1(\boldsymbol A)\ge\dots\ge\lambda_p(\boldsymbol A)$. Then, the spectral distribution of $\boldsymbol A$ is the normalized counting measure on the eigenvalues of $\boldsymbol A$  
$$\mu^{\boldsymbol A}:=p^{-1}\sum_{i=1}^p\delta_{\lambda_i(\boldsymbol A)},$$
where $\delta_x$ is the Dirac measure on $x$. We write
\begin{align*}
d_L(\mu,\nu)=\inf \Big\{\varepsilon>0 \ &\Big\arrowvert \ \mu((-\infty,x-\varepsilon])-\varepsilon\le \nu((-\infty,x])\\
&\hspace{1cm} \le \mu((-\infty,x+\varepsilon])+\varepsilon  \text{ for all }x\in\R\Big\}
\end{align*}
for the L\'evy distance between two probability measures $\mu$ and $\nu$. Moreover, we will also use frequently the Kolmogorov distance
$$\d_K(\mu,\nu)=\sup_{x\in\R}|\mu((-\infty,x])-\nu((-\infty,x])|.$$
Recall the basic relation $d_L(\mu,\nu)\le d_K(\mu,\nu)$.
We abbreviate the set $\{1,\dots,p\},~p\in\N,$ by $[p]$. For any subset $N\subset [p]^2$ the matrix $\boldsymbol A=\boldsymbol 1_N\in\R^{p\times p}$ has entries $A_{ij}=\ind_N(i,j)$, where $\ind_N:[p]^2\rightarrow \{0,1\}$ is the indicator function on $N$. For $N:=\{(i,j)\in [p]^2: |i-j|\le d\}$ we define $\boldsymbol1_{d}:=\boldsymbol1_{N}$.
For an expression $f(p,d,n,l)$ we write $O_l(g(p,d,n,l))$ if there exists a positive function $h$ such that $f(p,d,n,l)\le h(l)g(p,d,n,l)$ for all $p,d,n,l$.\\ 
Let us recall some useful results to bound the L\'evy distance between the spectral distributions of two symmetric matrices $A,B\in\R^{p\times p}$.

\begin{theorem}[Theorem A.43 of \cite{Bai2010}]\label{theorem: A43}
Let $A$ and $B$ be two $p\times p$ symmetric matrices. Then,
\begin{equation}
d_K\left(\mu^A,\mu^B\right)\le \frac{1}{p}\rank(A-B),
\end{equation}
where $\mu^A$ and $\mu^B$ denote the spectral distributions of $A$ and $B$, respectively. 
\end{theorem}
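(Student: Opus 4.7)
\bigskip

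\noindent\textbf{Proof plan.} The plan is to deduce the bound from an eigenvalue-interlacing property for rank-$r$ perturbations. Set $C := A - B$ and $r := \rank(C)$; it is enough to establish $|F^A(x) - F^B(x)| \le r/p$ for every $x \in \R$, where $F^A(x) := \mu^A((-\infty,x])$ and $F^B(x) := \mu^B((-\infty,x])$ are the cumulative distribution functions of the two spectral measures.

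First I would exploit the fact that $V := \ker C$ has dimension at least $p - r$ and that $A$ and $B$ agree on $V$ as quadratic forms, since $\langle Av, v\rangle - \langle Bv, v\rangle = \langle Cv, v\rangle = 0$ for $v \in V$. Consequently the orthogonal compressions $P_V A P_V$ and $P_V B P_V$ have identical eigenvalues; denote the $k$-th largest of these common values by $\mu_k$ for $k = 1,\ldots, p - r$. Next, I would apply the Courant-Fischer min-max characterisation
\begin{equation*}
\lambda_k(A) \;=\; \max_{\dim W = k}\; \min_{v \in W \setminus \{0\}} \frac{\langle Av, v\rangle}{\|v\|^2},
\end{equation*}
first on the full space $\R^p$, then restricted to subspaces $W \subset V$. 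A dimension count (intersecting an arbitrary $(k+r)$-dimensional subspace with the codimension-$r$ space $V$ yields a subspace of dimension at least $k$, whereas any $k$-dimensional $W \subset V$ is admissible in the unrestricted maximum) produces the two-sided bound
\begin{equation*}
\lambda_{k+r}(A) \;\le\; \mu_k \;\le\; \lambda_k(A), \qquad k = 1,\ldots, p-r,
\end{equation*}
together with the analogous chain for $B$. Combining these yields the key interlacing $\lambda_{k+r}(A) \le \lambda_k(B)$ and, symmetrically, $\lambda_{k+r}(B) \le \lambda_k(A)$.

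Finally, I would convert the interlacing into the Kolmogorov bound. Fix $x \in \R$ and, without loss of generality, assume $pF^A(x) \ge pF^B(x)$. Setting $k := pF^A(x)$, the eigenvalues of $A$ that are $\le x$ are precisely $\lambda_{p-k+1}(A) \le \cdots \le \lambda_p(A)$, so $\lambda_{p-k+1}(A) \le x$. By the interlacing applied with index $p-k+1$,
\begin{equation*}
\lambda_{p-k+1+r}(B) \;\le\; \lambda_{p-k+1}(A) \;\le\; x,
\end{equation*}
which forces at least $k - r$ eigenvalues of $B$ to lie in $(-\infty, x]$. Hence $pF^B(x) \ge k - r = pF^A(x) - r$, and taking the supremum over $x \in \R$ yields $d_K(\mu^A, \mu^B) \le r/p$. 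The one delicate point is the dimension-counting step underlying the Courant-Fischer derivation of the interlacing (essentially Cauchy interlacing in a mildly extended form); once that is secured, the passage to the CDF bound is purely bookkeeping with indices.
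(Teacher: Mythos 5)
The paper does not actually prove this statement; it is quoted verbatim as Theorem A.43 of \cite{Bai2010}, so there is no internal proof to compare against. That said, your argument is correct and reproduces the standard proof of this rank inequality: the kernel $V=\ker(A-B)$ has codimension $r=\rank(A-B)$, the compressions $P_V A P_V$ and $P_V B P_V$ coincide on $V$ because $(A-B)v=0$ for $v\in V$, and the Courant--Fischer dimension count gives the Cauchy-type interlacing $\lambda_{k+r}(A)\le\mu_k\le\lambda_k(A)$ for $k=1,\dots,p-r$, hence $\lambda_{k+r}(B)\le\lambda_k(A)$ and symmetrically with $A,B$ swapped. The translation into the Kolmogorov bound is also right; the only cosmetic point you leave implicit is the edge case $k=pF^A(x)\le r$, where the index $p-k+1$ falls outside $\{1,\dots,p-r\}$ and the interlacing cannot be invoked, but there the inequality $F^B(x)\ge F^A(x)-r/p$ is trivially true since the right-hand side is nonpositive. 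With that remark spelled out, the argument is complete and is essentially the same one found in Bai--Silverstein.
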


\begin{theorem}[Theorem A. 38 of \cite{Bai2010}]\label{theorem: A38}
Let $\lambda_1,\dots,\lambda_p$ and $\delta_1,\dots,\delta_p$ be two families of real numbers and their empirical distributions be denoted by $\mu$ and $\bar \mu$. Then, for any $\alpha>0$, we have 
\begin{equation}
d_L^{\alpha+1}(\mu,\bar \mu)\le \min_{\pi}\frac{1}{p}\sum_{k=1}^p |\lambda_k-\delta_{\pi(k)}|^\alpha,
\end{equation}
where the minimum is running over all permutations $\pi$ on $\{1,\dots, d\}$.
\end{theorem}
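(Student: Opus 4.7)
The plan is to couple the two empirical measures through a common uniform random index on $[p]$ and then apply Markov's inequality. The L\'evy distance $d_L(\mu,\bar\mu)$ depends on the families $(\lambda_k)$ and $(\delta_k)$ only through the underlying multisets and is therefore invariant under relabeling. Consequently, it suffices to prove for an arbitrary fixed pairing that
\begin{equation*}
d_L^{\alpha+1}(\mu,\bar\mu)\;\le\; \frac{1}{p}\sum_{k=1}^p |\lambda_k-\delta_k|^\alpha;
\end{equation*}
applying this statement to the pairs $(\lambda_k,\delta_{\pi(k)})_k$ (which leave $\mu$ and $\bar\mu$ unchanged) and then taking the infimum over $\pi$ will yield the theorem.

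To prove the displayed inequality I would set $\eta := \frac{1}{p}\sum_{k=1}^p|\lambda_k-\delta_k|^\alpha$ and $\varepsilon := \eta^{1/(\alpha+1)}$. Markov's inequality then gives
\begin{equation*}
\#\{k\in[p]:|\lambda_k-\delta_k|>\varepsilon\} \;\le\; \varepsilon^{-\alpha}\sum_{k=1}^p |\lambda_k-\delta_k|^\alpha \;=\; p\,\varepsilon.
\end{equation*}
For fixed $x\in\R$, every index $k$ with $\delta_k\le x$ either satisfies $\lambda_k\le x+\varepsilon$, or else $|\lambda_k-\delta_k|>\varepsilon$ and is counted above, so
\begin{equation*}
\#\{k:\delta_k\le x\} \;\le\; \#\{k:\lambda_k\le x+\varepsilon\} + p\,\varepsilon,
\end{equation*}
which after dividing by $p$ reads $\bar\mu((-\infty,x])\le \mu((-\infty,x+\varepsilon])+\varepsilon$. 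Swapping the roles of $\lambda$ and $\delta$ produces the companion bound, so $d_L(\mu,\bar\mu)\le\varepsilon$ and hence $d_L^{\alpha+1}(\mu,\bar\mu)\le\eta$.

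The argument is short; the only real pitfall I see is the temptation to reduce to the sorted pairing via the rearrangement inequality. That shortcut identifies the minimum only in the convex regime $\alpha\ge 1$; for $\alpha\in(0,1)$ the map $t\mapsto |t|^\alpha$ is concave and sorted need not minimize the sum. The Markov-plus-coupling route instead delivers the bound for \emph{every} permutation uniformly in $\alpha>0$, which is exactly what is needed to pass to the minimum on the right-hand side.
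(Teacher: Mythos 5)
The paper recalls this result as Theorem A.38 of \cite{Bai2010} and does not supply a proof of its own, so there is nothing in the paper to compare against directly. Your proof is correct: since the empirical measures depend only on the underlying multisets, relabeling leaves $\mu$ and $\bar\mu$ untouched, so it suffices to bound $d_L$ for one fixed pairing; the Markov count with $\varepsilon=\eta^{1/(\alpha+1)}$ then produces the two one-sided cdf inequalities $\bar\mu((-\infty,x])\le \mu((-\infty,x+\varepsilon])+\varepsilon$ and (after swapping roles and shifting $x$) $\mu((-\infty,x-\varepsilon])-\varepsilon\le\bar\mu((-\infty,x])$, which together give $d_L(\mu,\bar\mu)\le\varepsilon$. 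This is the standard Markov-plus-coupling argument used to prove the result in the cited reference. One trivial edge case is worth a line: if $\eta=0$ then $\varepsilon=0$ is not an admissible value in the infimum defining $d_L$, but then $\mu=\bar\mu$ and both sides vanish. Your caveat about the rearrangement inequality failing to identify the minimizing permutation when $\alpha\in(0,1)$ is accurate, though moot here since your argument establishes the bound uniformly over all pairings.
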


\begin{corollary}[Corollary A.41 of \cite{Bai2010}]\label{corollary: A41}
Let $A$ and $B$ be two $d\times d$ Hermitian matrices with spectral distribution $\mu^A$ and $\mu^B$. Then,
\begin{equation}
d_L^3\left(\mu^A,\mu^B\right)\le \frac{1}{d}\tr\big((A-B)(A-B)^\ast \big).
\end{equation}
\end{corollary}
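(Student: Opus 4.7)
The plan is to deduce the corollary by combining Theorem~\ref{theorem: A38} (with the specific choice $\alpha=2$) with the Hoffman--Wielandt inequality for Hermitian matrices. Writing $\lambda_1(A)\ge\dots\ge\lambda_d(A)$ and $\lambda_1(B)\ge\dots\ge\lambda_d(B)$ for the eigenvalues of $A$ and $B$, Theorem~\ref{theorem: A38} immediately yields
\begin{equation*}
d_L^3(\mu^A,\mu^B)\le \min_\pi \frac{1}{d}\sum_{k=1}^d |\lambda_k(A)-\lambda_{\pi(k)}(B)|^2,
\end{equation*}
so it only remains to control the right-hand side.

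The second step is to invoke the Hoffman--Wielandt inequality, which says that for any two Hermitian matrices $A,B\in\R^{d\times d}$,
\begin{equation*}
\min_\pi \sum_{k=1}^d |\lambda_k(A)-\lambda_{\pi(k)}(B)|^2 \le \|A-B\|_F^2 = \tr\bigl((A-B)(A-B)^\ast\bigr),
\end{equation*}
where $\|\cdot\|_F$ denotes the Frobenius norm. Combining the two bounds and dividing by $d$ gives the claim. (In fact the minimum on the left is attained by the permutation $\pi$ that matches the ordered eigenvalues of $A$ with those of $B$, but one does not need this refinement here.)

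The whole argument is a one-line deduction from two named results, so there is no real obstacle: Theorem~\ref{theorem: A38} is assumed, and Hoffman--Wielandt is the standard estimate that makes the spectral-distribution distance amenable to matrix-norm bounds. The only technical point worth highlighting is that Hermiticity is essential for Hoffman--Wielandt in the form stated; without it one only gets weaker perturbation inequalities, and the clean bound in terms of $\tr((A-B)(A-B)^\ast)$ would fail.
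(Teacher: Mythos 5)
Your proof is correct, and it follows the standard route: Theorem~\ref{theorem: A38} with $\alpha=2$ reduces the claim to bounding $\min_\pi \frac{1}{d}\sum_k|\lambda_k(A)-\lambda_{\pi(k)}(B)|^2$, and the Hoffman--Wielandt inequality for Hermitian matrices supplies exactly the bound $\tr\bigl((A-B)(A-B)^\ast\bigr)$. The paper does not reproduce a proof of this corollary (it is quoted verbatim from Bai and Silverstein), but the argument you give is precisely the one in that reference, so there is nothing to add.
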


\subsection{Method of moments}
The method of moments is a tool to deduce weak convergence of a sequence of measures and goes back to \cite{Tchebycheff1890}. \cite{Wigner1958} was the first to apply this technique in random matrices for the purpose of establishing the weak convergence of a sequence of expected empirical spectral distributions 
of Wigner matrices to the semi-circular law. The foundation of the method of moments is the following statement.
\begin{theorem}[Moment convergence theorem]
Let $\mu_n, n\in\N,$ be probability measures on the real line with finite moments $m_{n,r}:=\int x^r\d\mu_n(x),~r\in\N$. Suppose that
$m_r=\lim_{n\to\infty}m_{n,r}$
exists for every $r\in \N$. Then, there exists a probability measure $\mu$ with moments $m_r,~r\in\N$. Moreover, if $\mu$ is the unique probability measure with moments $m_r,~r\in\N,$ then the sequence $(\mu_n)$ converges weakly to $\mu$.  
\end{theorem}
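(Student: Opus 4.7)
The plan is to combine tightness of the sequence $(\mu_n)$ with the uniqueness hypothesis via a standard subsequence argument, the key ingredient being a uniform-integrability bound that lets moments pass to the weak limit.

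\textbf{Step 1: Tightness.} Because $m_{n,2}\to m_2$, the sequence $(m_{n,2})_n$ is bounded, and Chebyshev's inequality gives
\begin{equation*}
\mu_n\bigl([-K,K]^c\bigr)\le m_{n,2}/K^2,
\end{equation*}
which can be made arbitrarily small uniformly in $n$ by choosing $K$ large. By Prokhorov's theorem, every subsequence of $(\mu_n)$ admits a further weakly convergent sub-subsequence $\mu_{n_k}\Rightarrow\nu$ with $\nu$ a probability measure on the real line.

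\textbf{Step 2: Moments pass to subsequential limits.} The main obstacle is that $x\mapsto x^r$ is unbounded, whereas weak convergence only controls bounded continuous test functions. I would resolve this by the uniform-integrability bound
\begin{equation*}
\int_{|x|>K}|x|^r\,d\mu_n(x)\le K^{-2}\sup_n m_{n,r+2},
\end{equation*}
whose right-hand side is finite (since $m_{n,r+2}\to m_{r+2}$) and tends to $0$ as $K\to\infty$, uniformly in $n$. Applying weak convergence of $(\mu_{n_k})$ to a continuous compactly supported truncation of $x^r$ and then letting the truncation level go to infinity yields $\int x^r\,d\nu=\lim_k m_{n_k,r}=m_r$ for every $r\in\N$. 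In particular, at least one such subsequential limit $\nu$ exists and is a probability measure with moments $(m_r)_{r\in\N}$, which proves the existence statement of the theorem.

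\textbf{Step 3: Full convergence under uniqueness.} If $\mu$ is the unique probability measure with moments $(m_r)_{r\in\N}$, then by Step 2 every weak subsequential limit of $(\mu_n)$ must equal $\mu$. Combined with the relative compactness of $(\mu_n)$ from Step 1, a standard subsequence-of-subsequence argument (if $\mu_n\not\Rightarrow\mu$, extract a subsequence staying bounded away from $\mu$ in the L\'evy metric, then re-extract a weakly convergent sub-subsequence, contradiction) gives $\mu_n\Rightarrow\mu$.

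The main obstacle is Step 2: the unboundedness of $x^r$ against mere weak convergence. It is resolved by using one extra moment, since the convergence of $m_{n,r+2}$ upgrades pointwise moment convergence into the uniform tail bound required to handle unbounded test functions. Steps 1 and 3 are then purely formal consequences of tightness and the standard subsequence principle.
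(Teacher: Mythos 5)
Your proof takes essentially the same route as the paper: tightness via the second moment and Chebyshev/Markov, Prokhorov's theorem, uniform integrability to push moments through to subsequential weak limits, and a subsequence argument under the uniqueness hypothesis. The paper simply asserts uniform integrability of $(X_n^r)_n$ from moment convergence without displaying the tail bound, so your Step~2 is a welcome elaboration rather than a departure. One small technical slip there: the bound
\begin{equation*}
\int_{|x|>K}|x|^r\,d\mu_n(x)\le K^{-2}\sup_n m_{n,r+2}
\end{equation*}
is only valid when $r+2$ is even, since for odd $r$ the quantity $m_{n,r+2}=\int x^{r+2}\,d\mu_n$ is signed and need not dominate $\int|x|^{r+2}\,d\mu_n$. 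Replacing it by an even-order moment, for instance
\begin{equation*}
\int_{|x|>K}|x|^r\,d\mu_n(x)\le K^{-r}\sup_n m_{n,2r},
\end{equation*}
repairs the estimate without affecting anything else in the argument.
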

\begin{proof}
For arbitrary $\varepsilon>0$ let $N\in\N$ be sufficiently large such that $$|m_{n,2}-m_2|\le \varepsilon \ \ \text{for all }n\ge N.$$ Then by Markov's inequality,
\begin{align*}
\mu_n([-x,x]^c)\le  \frac{m_{n,2}}{2x^2}\le \frac{m_{2}+\varepsilon}{2x^2}
\end{align*}
for any $x>0$ which implies that the sequence $(\mu_n)$ is tight. 
Hence, by Prokhorov's theorem for any subsequence $(\mu_{n_k})_k$ there exists a subsubsequence $\mu_{n_{k_l}}$ converging weakly to a probability measure $\mu$. We show that the moments of $\mu$ are given by the sequence $(m_r),~r\in\N$. Thereto, let $X_{n_{k_l}}\sim \mu_{n_{k_l}}$ and $X\sim \mu$. By the convergence of all moments, the sequences $(X_n^r)_n$ are uniformly integrable and therefore $X^r$ is integrable, and $\E X_{n_{k_l}}^r\rightarrow \E X^r=m_r$ for all $r\in\N$. Now suppose that $\mu$ is the unique measure with moments $m_r,~r\in\N$. Then each subsequence $(\mu_{n_{r}})$ has a weakly convergent subsubsequence $(\mu_{n_{k_l}})$ with limit $\mu$. This implies the weak convergence of $(\mu_n)$ to $\mu$.
\end{proof}
The question, whether a sequence of moments $m_r,~r\in\N,$ uniquely determines a measure $\mu$ on the real line, is partially answered by Carleman's condition which says that $\mu$ is the only measure with moments $m_r,~r\in\N,$ if
$$\sum_{r=1}^\infty {m_{2r}^{-1/(2r)}}=\infty.$$
This condition is satisfied if the moments do not grow too fast. In particular, thereby all probabilty measures with sub-exponential tails are determined by their moments. In the main theorem of this article the limiting spectral measure has even finite support.

\section{Combinatorial tools}\label{section: combinatorial}
In this section we introduce some basic combinatorial objects which will be useful to prove the convergence of the expected moments of the empirical spectral distribution of a banded sample covariance matrix.

\subsection{Walks on ordered trees}

A (finite simple) \textit{graph} $G=(V,E)$ is a pair of a finite vertex set $V\neq \emptyset$ and an edge set $E\subset\{e\subset V: |e|=2\}$ such that $V\cap E=\emptyset$. 
$\tilde G=(\tilde V,\tilde E)$ is a subgraph of $G$ if $\tilde V\subset V$ and $\tilde E\subset E$.

\noindent
Let $v\in V$ be the vertex of a graph $G=(V,E)$. The vertex $v$ is called \textit{incident} with an edge $e\in E$ if $v\in e$. The number $\deg(v)$ of edges incident with $v$ is the degree of $v$. A vertex $v'$ is said to be a \textit{neighbor} of $v$ if $\{v,v'\}\in E$. If $V=V_1+V_2$ such that for any edge $e\in E$ holds $e\cap V_1=e\cap V_2=1$, then $G$ is a \textit{bipartite graph} with parts $V_1$ and $V_2$. 

\noindent A \textit{walk} of \textit{length} $n-1$ on a graph  $G=(V,E)$ is a sequence of vertices $v_1,\dots,v_n$, $n\in \N$, such that $\{v_i,v_{i+1}\}\in E$ for all $i\in[n-1]$. The vertex $v_1$ is the \textit{start vertex}, $v_n$ the \textit{end vertex} and $v_2,\dots,v_{n-1}$ the \textit{inner vertices}. We say a vertex $v$ is \textit{visited} by a walk $v_1,\dots,v_n$ if $v\in\{v_1,\dots,v_n\}$. 
An edge $e\in E$ is \textit{crossed} by a walk $v_1,\dots,v_n$ if $e=\{v_i,v_{i+1}\}$ for some $i\in[n-1]$. We say the path $v_1,\dots,v_n$ crosses (resp. visits) an edge $e\in E$ (resp. a vertex $v\in V$) at step $k$ if $e=\{v_k,v_{k+1}\}$ (resp. $v=v_{k+1}$). 
A walk is \textit{closed} if the start vertex and the end vertex coincide. Further, a walk $v_1,\dots,v_n$ on a graph $G$ visiting each edge at most once is a \textit{path}. A \textit{circle} is closed walk $v_1,\dots,v_n,v_1$, where $v_1,\dots,v_n$ is a path.

\noindent A graph $G=(V,E)$ is said to be connected if for any pair of vertices $v,v'\in V$ there exists a path on $G$ from the start vertex $v$ to the end vertex $v'$. A (connected) component of $G$ is a graph $\tilde G$ with $\tilde V\subset V$ and $\tilde E=E\cap\{e\subset\tilde V~:~|e|=2\}$ such that $\tilde G$ is connected and for any two vertices $\tilde v \in \tilde V$ and $v\in V\setminus \tilde V$ there does not exist a path from $\tilde v$ to $v$ in $G$. In other words, the components of a graph $G$ are the maximal connected subgraphs of $G$. 

\noindent
A connected graph $G=(V,E)$ is called a tree if for any edge $e\in E$ the graph $(V,E\setminus\{e\})$ is not connected. That is, there is exactly one path from  $v\in V$ to $v'\in V$ for any two vertices $v,v'\in V$, and trees are free of circles. Moreover, it is well-known that a connected graph on $n\in\N$ vertices is a tree if and only if it contains exactly $n-1$ edges.

\noindent
A \textit{rooted} tree is a pair $(G,v_{root})$, where $G=(V,E)$ is a tree and $v_{root}\in V$ is a designated vertex of $G$ called the \textit{root} of $G$. There is a natural partial ordering on a rooted tree. We write $v\le_G v'$ if $v$ is visited by the path with start vertex $v_{root}$ and end vertex $v'$. Clearly, the root satisfies $v_{root}\le_G v$ for all $v\in V$. A neighbor $v'\in V$ of a vertex $v\in V$ with $v\le_G v'$ is a \textit{child} of $v$ and $v$ is the \textit{parent} of $v'$. 
\begin{figure}[htbp] 
  \centering
     \includegraphics[width=0.5\textwidth]{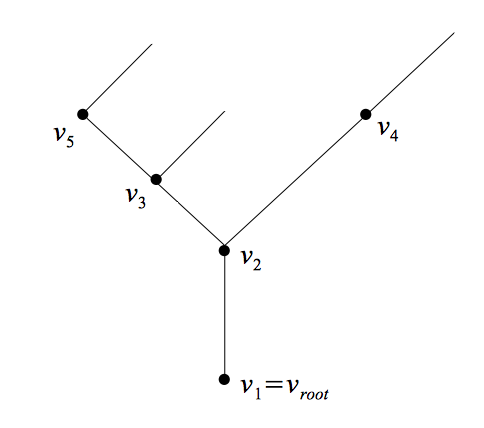}
  \caption{Example of a rooted tree $G$ with $v_1\le_G v_2\le_G v_3 \le_G v_5$ and $v_1\le_G v_2\le_G v_4$.}
  \label{fig:Bild1}
  \end{figure}

\noindent
A vertex $v\neq v_{root}$ has $\deg(v)-1$ children and one parent. If the children of each vertex $v\in V$ are equipped with a total order $\le_v$ then $G$ is called an ordered tree or plane tree. The last name is justified because there is a natural embedding of the graph into the plane by drawing the children of a vertex increasing from left to right. 
\begin{figure}[htbp] 
  \centering
     \includegraphics[width=0.9\textwidth]{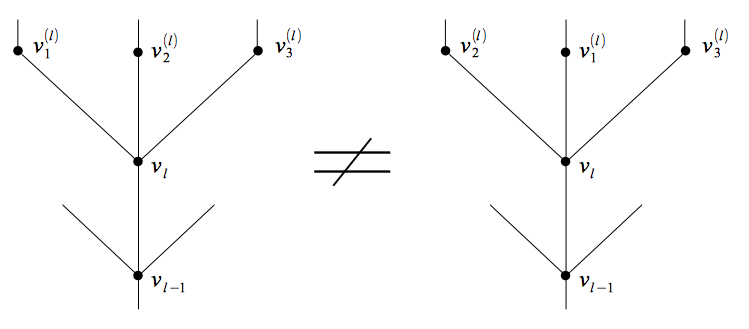}
  \caption{Example of two different ordered trees: The order on the children of $v_l$ in the left graph is $v_1^{(l)}\le v_2^{(l)}\le v_3^{(l)}$, whereas the depiction of the right graph implies $v_2^{(l)}\le v_1^{(l)}\le v_3^{(l)}$.}
  \label{fig:Bild2}
  \end{figure}
An ordered tree may be associated with a closed walk $v_1,\dots,v_{2|V|-1}$ ``around the tree'' defined by the following inductive procedure. The root $v_{root}$ is the starting vertex $v_1$ of the walk. Let $v_{k}\in V$ be the vertex visited at the $(k-1)$-th step and $v^{(k)}_1,\dots,v^{(k)}_l\in V$ its children. If the walk has already visited the vertex $v^{(k)}_i$ for some $i<l$ but not the vertex $v^{(k)}_{i+1}$ then $v_{k+1}=v^{(k)}_{i+1}$, otherwise $v_{k+1}$ is the parent of $v_k$ respectively $v_k$ is the end vertex of the walk if $v_{k}=v_{root}$. It is easy to see that the walk crosses all edges of the tree once in each direction and hence the procedure stops right after $2|V|-2$ steps at the root. On the other hand, let $v_1,\dots,v_{2n-1}$ be a closed walk on $G=(V,E)$ with $V=\{v_1,\dots,v_{2n-1}\}$, $|V|=n,$ and $E=\{\{v_i,v_{i+1}\}~:~i\in[n-1]\}$ such that each edge $e\in E$ is crossed at least twice by the walk $v_1,\dots,v_{2n-1}$. Since $|E|\le |V|-1$ and $G$ is connected it holds $|E|=|V|-1$. Hence, $G$ is a tree. Let $v_{root}:=v_1$ be the root of $G$. Then, for each vertex $v\in V$ of the tree there is a natural order on its children induced by the increasing sequence in which they have been visited by the walk for the first time. On a fixed vertex set $V=\{v_1,\dots,v_n\}$ this defines a bijection between ordered trees on $V$ and closed walks in $V$ crossing an each edge at least twice. Subsequently for an ordered tree $G$ this walk is called the canonical walk on $G$.
\begin{figure}[htbp] 
  \centering
     \includegraphics[width=0.7\textwidth]{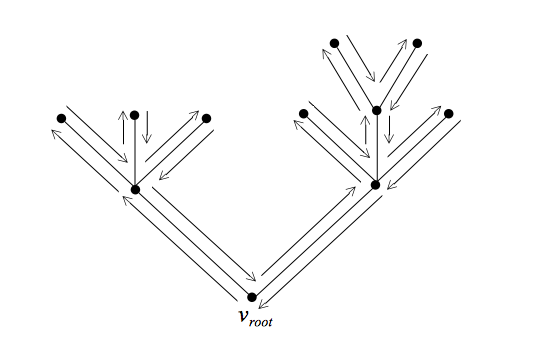}
  \caption{Example of a canonical walk on an ordered tree: The walk starts in the root and runs clock-wisely.} 
  \label{fig:Bild3}
  \end{figure}\\
  Two ordered trees $G$ and $G'$ with vertex set $V$ and $V'$ are isomorphic if there exists a bijection $\pi:V\rightarrow V'$ such that $v_1,\dots,v_{2|V|-1}$ is the canonical walk on $G$ and $\pi(v_1),\dots,\pi(v_{2|V|-1})$ the canonical walk on $G'$. The mapping $\pi$ is called an isomorphism. Let $\pi$ be an isomorphism from $G$ to $G'$, then the following properties are satisfied:
\begin{enumerate}
\item $G$ and $G'$ have the same number of vertices and edges.
\item Let $v,w\in V$. Then, $\{v,w\}$ is an edge of $G$ if and only if $\{\pi(v),\pi(w)\}$ is an edge of $G'$.
\item $v_1$ is the root $G$ and $\pi(v_1)$ the root of $G'$.
\item Let $v,w$ be two vertices of $G$. Then, $v\le_G w$ on $G$ if and only if $\pi(v)\le_G \pi(w)$ on $G'$.
\item Let $v$ be be a vertex of $G$, and $w_1$ and $w_2$ two of its children. Then, $w_1\le_v w_2$ if and only $\pi(w_1)\le_{\pi(v)}\pi(w_2)$. 
\end{enumerate} 
Each tree $G=(V,E)$ is a bipartite graph. To see this, fix some vertex $v\in V$. Then, define $V_1:=\{v'\in V:  \text{The path }v,\dots,v' \text{ has even length}\}$ and  $V_2:=\{v'\in V:  \text{The path }v,\dots,v' \text{ has uneven length}\}$. The sets $V_1$ and $V_2$ are well-defined since on a tree there is exactly one path with start vertex $v$ and end vertex $v'$. If $v',v''\in V_1$ or $v',v''\in V_2$ then it holds $\{v',v''\}\not\in E$, since for any two vertices $v',v''\in V$ with $\{v',v''\}\in E$ the length of the paths $v,\dots,v'$ and $v,\dots,v''$ differs by $1$. So, either $v,\dots,v'$ or $v,\dots,v''$ has even length, and therefore $V=V_1+V_2$. 
\begin{figure}[htbp] 
  \centering
     \includegraphics[width=0.6\textwidth]{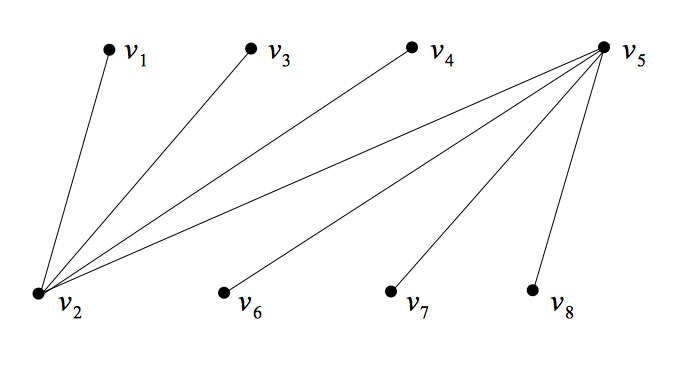}
  \caption{Depiction of a tree via its two parts.} 
  \label{fig:Bild6}
  \end{figure}\\
Let $p,n\in \N$ and refer to the set $\{(i,1)~:~i\in [p]\}$ as the $I$-\textit{line} and $\{(k,0)~:~k\in [n]\}$ as the $K$-\textit{line}. Subsequently we will only consider ordered trees $G=(V_1+V_2,E)$ such that the part $V_1$ containing the root of $G$ is a subset of $I$-line, whereas $V_2$ is subset of the $K$-line. We will usually identify the elements on the $I$-line and on the $K$-line with its first component where a label $i$ always refers to a vertex on the $I$-line and $k$ to a vertex on the $K$-line. Moreover, we adopt the usual order on the natural numbers to the $I$-line as well as to the $K$-line. Let $\mathcal G_{p,n,l+1}$ be the set of ordered rooted trees on $l+1$ vertices such that the part containing the root lies on the $I$-line and the other part on the $K$-line. We denote an ordered tree in $\mathcal G_{p,n,{l+1}}$ by $G(i,k)$, where $i=(i_1,\dots,i_l)\in [p]^l$ and  $k=(k_1,\dots,k_l)\in [n]^l$ such that $i_1,k_1\dots,i_l,k_l,i_1$ is the canonical walk around the ordered tree $G(i,k)$ with root $i_1$.\begin{figure}[htbp] 
  \centering
     \includegraphics[width=0.9\textwidth]{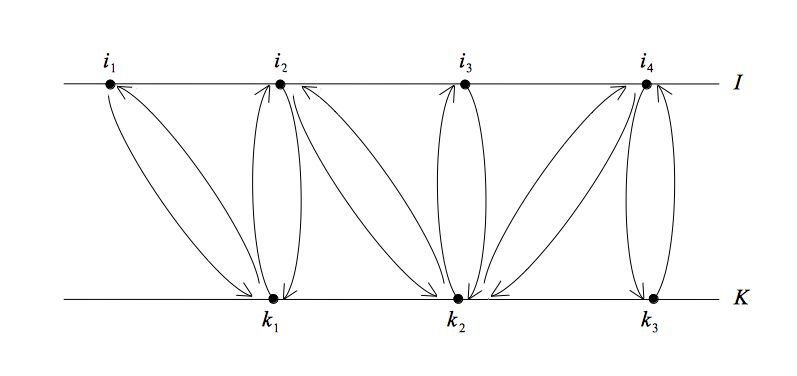}
  \caption{Depiction of an ordered tree with canonical walk $i_1,k_1,i_2,k_2,i_3,k_2,i_4,k_3,i_4,k_2,i_2,k_1,i_1$ via its two parts on the $I$- and $K$-line.} 
  \label{fig:Bild7}
  \end{figure}\\ 
An essential quantity to evaluate the $l$-th expected moment of a (classical) high-dimensional sample covariance matrix is the number of ordered trees in $\mathcal G_{p,n,{l+1}}$. The usual approach to count the number of graphs in $\mathcal G_{p,n,{l+1}}$ is to subdivide $\mathcal G_{p,n,{l+1}}$ into isomorphy classes and then to count the number of graphs in each isomorphy class. Let, $G(i,k)\in \mathcal G_{p,n,{l+1}}$ be an arbitrary ordered tree. Note that an isomorphism $\pi$ from $G(i,k)$ to an isomorphic graph $G(i',k')$ preserves the parts. Hence, we split $\pi$ into its restriction to the vertices on the $I$-line and $K$-line denoted by $\pi_I:\{i_1,\dots,i_l\}\to \{i_1',\dots,i_l'\}$ and $\pi_K:\{k_1,\dots,k_l\}\to \{k_1',\dots,k_l'\}$. Among the graphs in the isomorphy class $[G(i,k)]$ there is one graph $G(i^\text{c},k^\text{c})$ which is called the canonical representative of $[G(i,k)]$ defined as follows. The enumeration is equivalent on both parts. Therefore, we restrict to the part on the $I$-line. Let $i_1'=1$ and $1<r\le l$. If $|\{i_1,\dots,i_r\}|=|\{i_1,\dots,i_{r-1}\}|+1$ , then $i_r'=|\{i_1,\dots,i_{r-1}\}|+1$, otherwise there exists an index $s<r$ such that $i_r=i_s$ and we define $i_r'=i_s'$. Indeed, the graphs $G(i,k)$ and $G(i^\text{c},k^\text{c})$ are isomorphic and the canonical representative does not depend on the choice of the ordered tree $G(i,k)\in[G(i,k)]$. A canonical representative of a equivalence class is also called a canonical ordered tree. For $l+1\le n\vee p$ the number of equivalence classes does only depend on $l$ but not on $p$ and $n$. Now, let $G(i^c,k^c)$ be a canonical ordered tree. The number of ordered trees in $[G(i^c,k^c)]$ is given by the product of numbers of bijections from $\{k_1^c,\dots,k_l^c\}$ into subsets of the $I$-line and bijections from $\{i_1,\dots,i_l\}$ into subsets of the $K$-line. Both latter quantities depend only on the number of vertices on the $I$-line $r+1$ and are explicitly given by 
$$\frac{p!}{(p-(r+1))!} \ \ \text{and} \ \ \frac{n!}{(n-(l-r)!}.$$
Hence, two isomorphy classes have the same cardinality if their canonical representatives have the same number of vertices on the $I$-line. For fixed $r<l$ this rises the question how many canonical ordered trees have $r+1$ vertices on the $I$-line. It is well-known (see e.g. Lemma 3.4 in \cite{Bai2010}) that the answer is 
$$\frac{1}{r+1}\binom{l}{r}\binom{l-1}{r}.$$  
Alltogether, the number of ordered trees in  $\mathcal G_{p,n,{l+1}}$ is given by
$$\sum_{r=0}^{l-1}\frac{1}{r+1}\binom{l}{r}\binom{l-1}{r}\frac{p!}{(p-(r+1))!}\frac{n!}{(n-(l-r)!}.$$
Now, let us consider ordered trees with $l+1$ vertices which have a band structure on the $I$-line. This new concept will be helpful to evaluate the expected moments of banded sample covariance matrices. We say an ordered tree $G(i,k)\in \mathcal G_{p,n,{l+1}}$ is $d$-\textit{banded} (on the $I$) if the multi-index $i=(i_1,\dots,i_l)$ satisfies $|i_s-i_{s+1}|\le d$ for any $s=1,\dots,l$ with $i_{l+1}=i_1$. 
Denote the subset of all $d$-banded ordered trees in  $\mathcal G_{p,n,{l+1}}$ by $\mathcal B_{p,n,d,{l+1}}$ 
Subsequently, we assume that $d\ge l$ and $p>2ld$. The cardinality of $\mathcal B_{p,n,d,{l+1}}$ 
is crucial to evaluate the expected moments of the spectral measure of band sample covariance matrices in high dimensions. $\mathcal B_{p,n,d,{l+1}}$ 
has the same number of canonical ordered trees as $\mathcal G_{p,n,{l+1}}$, however the (asymptotic) number of isomorphic ordered trees to a canonical ordered tree does not only depend the number of vertices on the $I$-line but on the set of degrees of the vertices on the $K$-line. The later statement is investigated in the next subsection.

\subsection{Restricted compositions and the number of $d$-banded ordered trees}
A basic tool to evaluate the number of graphs in $\mathcal B_{p,n,d,{l+1}}$ isomorphic to a canonical graph $G(i^{\text{c}},k^{\text{c}})$ are compositions of natural numbers.
\begin{definition} 
For any $n\in N$, a tupel $(a_1,\dots,a_k)\in \N^k,~k\in\N$ satisfying $a_1+\dots+a_k=n$ is called a $k$-\textit{composition} of $n$. If a set $A\subset \N^k$ is designated and $(a_1,\dots,a_k)\in A$, $a_1+\dots+a_k=n$, then we name $(a_1,\dots,a_k)$ a \textit{restricted} $k$-composition. For the special case $A=\{1,\dots,m\}^k,~m\in\N$, define $F(n,k,m)$ as the number of the corresponding restricted $k$-compositions of $n$.
\end{definition}
The values $F(n,k,m),~n,k,m\in\N,$ may be determined by the method of generating functions and are explicitly given by
$$F(n,k,m)=\sum_{j=0}^k(-1)^j\binom{k}{j}\binom{n-jm-1}{k-1},$$
see \cite{Abramson1976}. Now, let $G(i^\text{c},k^\text{c})\in \mathcal B_{p,n,d,{l+1}}$ be an canonical ordered tree. The aim of this subsection is to express $|[G(i^\text{c},k^\text{c})]|$ in terms of the numbers $$F(\deg(k^c_s)d,\deg(k^c_s),2d),~k^c_s\in\{k^c_1,\dots,k^c_l\}.$$ 

\begin{lemma}\label{lemma: banded trees}
Let $G(i^\text{c},k^\text{c})\in \mathcal B_{p,n,d,{l+1}}$ with $r+1$ vertices on the $I$-line be a canonical ordered tree and $[G(i^\text{c},k^\text{c})]$ the class of isomorphic ordered trees in $\mathcal B_{p,n,d,{l+1}}$. Then,
\begin{align*}
\left\arrowvert[G(i^c,k^c)]\right\arrowvert=pn^{l-r}\prod_{k^\ast \in[l-r]}&F(\deg(k^\ast)d,\deg(k^\ast),2d)\\
&+O_l\left(n^{l-r}d^{r+1}+pn^{l-r}d^{r-1}+pd^rn^{l-r-1}\right).
\end{align*}
\end{lemma}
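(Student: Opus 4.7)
The plan is to compute $|[G(i^c,k^c)]|$ by enumerating injective labelings of the abstract canonical tree that respect the band condition. I would proceed hierarchically: first fix the label of the root, then label the $K$-vertices in walk order, and finally place the remaining $I$-vertices using the local band constraint around each $K$-vertex.

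For the root, there are $p$ choices in $[p]$; restricting to the interior $[ld+1,\,p-ld]$ avoids boundary wrap-around and costs only $O_l(d)$ labels, which after multiplication with the remaining per-root count contributes the error term $O_l(n^{l-r}d^{r+1})$. The $l-r$ abstract $K$-vertices are then labeled with distinct elements of $[n]$ in the order they first appear in the canonical walk; this gives $n(n-1)\cdots(n-(l-r)+1)=n^{l-r}+O_l(n^{l-r-1})$ ways, and the subleading correction produces the error term $O_l(pd^rn^{l-r-1})$ after multiplication with the remaining per-root count.

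The substantive step is placing the $r$ non-root $I$-vertices. Each such vertex has a unique parent $K$-vertex, so the band constraints decouple into $l-r$ independent local cyclic constraints, one per $K$-vertex. For a $K$-vertex $k^\ast$ of degree $m=\deg(k^\ast)$ with parent label $p^\ast$ already placed and ordered children labels $c_1,\dots,c_{m-1}$ still to be chosen, the cyclic walk $p^\ast,c_1,\dots,c_{m-1},p^\ast$ has signed step sizes $s_j=c_j-c_{j-1}\in\{-d,\dots,d\}$ satisfying $\sum_{j=1}^{m}s_j=0$, and distinctness forces all of $p^\ast,c_1,\dots,c_{m-1}$ to differ. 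For $p^\ast$ in the interior of $[p]$ the count of valid tuples is independent of $p^\ast$; via the shift $a_j=s_j+d\in\{0,\dots,2d\}$ (so $\sum_j a_j=md$), this count coincides with $F(md,m,2d)$ up to an additive $O_l(d^{m-2})$ correction, where the corrections come from the off-by-one shift between the symmetric step range $\{-d,\dots,d\}$ and the asymmetric range implicit in $F$, as well as from the internal injectivity constraint. Taking the product over all $K$-vertices yields the leading count $p n^{l-r}\prod_{k^\ast\in[l-r]}F(\deg(k^\ast)d,\deg(k^\ast),2d)$.

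The final error term $O_l(pn^{l-r}d^{r-1})$ absorbs two sources of the same order. First, the product of the per-$K$-vertex corrections of size $O_l(d^{m-2})$: since $\sum_{k^\ast}(\deg(k^\ast)-1)=l-(l-r)=r$ (the $K$-side degree sum equals the number of edges in the tree), we have $\prod_{k^\ast} F(\deg(k^\ast)d,\deg(k^\ast),2d)\sim d^r$, and each factor's relative correction $O_l(1/d)$ telescopes to a product correction $O_l(d^{r-1})$. Second, cross-subtree coincidences among $I$-labels belonging to different $K$-vertices, each costing only $O(1/d)$ of the leading count with $O_l(1)$ such pairs, so of the same order. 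The main obstacle is this third step: the careful identification of the per-$K$-vertex count with $F(\deg(k^\ast)d,\deg(k^\ast),2d)$ through the affine shift, and the uniform $O_l(d^{m-2})$ bound on the three sub-leading corrections (off-by-one in the step range, local injectivity, and interior boundary) that is needed for all contributions to combine into the three stated error terms.
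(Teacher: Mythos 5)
Your proposal follows the same strategy as the paper's own proof: enumerate labelings hierarchically (root with an interior restriction, then $K$-vertices by a falling factorial in $[n]$, then the $I$-children around each $K$-vertex), reduce the per-$K$-vertex band constraint via the affine shift $a_j=s_j+d$ to the restricted composition count $F(\deg(k^\ast)d,\deg(k^\ast),2d)$ up to $O_l(d^{\deg(k^\ast)-2})$, and collect the three error terms from the boundary restriction, the falling-factorial correction, and the composition/injectivity corrections exactly as the paper does. The bookkeeping you give (in particular $\sum_{k^\ast}(\deg(k^\ast)-1)=r$ and the telescoping of relative $O_l(1/d)$ corrections to $O_l(d^{r-1})$) matches the paper's argument.
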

\begin{proof}
Let $r+1$ be the number of vertices on the $I$-line. Each pair $$(\pi_{I},\pi_{K}),\ \ \pi_{I}:[r+1]\rightarrow [p],~\pi_{K}:[l-r]\rightarrow[n]$$ of injective functions with $|\pi(i_s^\text{c})-\pi(i_{s+1}^\text{c})|\le d$ corresponds to an ordered tree $G(i,k)\in[G(i^\text{c},k^\text{c})]$ with $i_s=\pi(i_s^{\text{c}})$ and $k_s=\pi(k_s^{\text{c}}),~s=1,\dots,l$, and vice versa. As in the classical case there are $\frac{n!}{(n-(l-r))!}$ possible choices for the mapping $\pi_K$. The evaluation of the number of permissible mappings $\pi_{I}$ is more involved. First we have $p$ possible labels for the root $i_1$ of the tree. For simplification we consider only labelings with $ld < i_1< p-ld$. This reduces the number of permissible labelings by $O_l(n^{l-r}d^{r+1})$ but ensures that we do not have to take into consideration labelings close to the boundary of the $I$-line. Now, the remaining vertices on the $I$-line of the ordered tree are labeled by induction on the vertices lying on the $K$-line. For simplification of the presentation assume that $k=k^\text{c}$ ,and let $k^\ast=1,\dots,[l-r]$ run over the vertices of the ordered tree on the $K$-line. Assume that the children of the vertex $k^\ast$ on the $K$-line is already labeled for all $k^\ast<s$, $s\le l-r$. Then for $k^\ast=s$ we label its children in the following way. Let $i^{(s)}_1\le_s\dots\le_si^{(s)}_{\deg(s)-1}$ be the ordered children of $s$. The parent $i^{(s)}_\text{parent}$ of $s$ is already labeled by the inductive procedure.  By the definition of the canonical walk, a labeling of the children does not violate the $d$-band structure on the $I$-line if and only if 
\begin{align*}
\big|i^{(s)}_\text{parent}-i^{(s)}_1\big|\le d,&~\big|i^{(s)}_1-i^{(s)}_2\big|\le d,\dots,\\
&\big|i^{(s)}_{\deg(s)-2}-i^{(s)}_{\deg(s)-1}\big|\le d,~\big|i^{(s)}_{\deg(s)-1}-i^{(s)}_\text{parent}\big|\le d
\end{align*} and each label in $[p]$ is assigned to at most one already labeled vertex on the $I$-line. \begin{figure}[htbp] 
  \centering
     \includegraphics[width=0.7\textwidth]{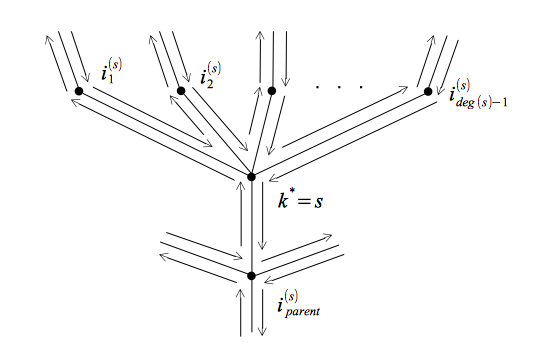}
  \caption{We label simultaneously the vertices $i_1^{(s)},i_2^{(s)},\dots,i_{\deg(s)-1}^{(s)}$. The vertices on the $I$-line ``above'' those vertices have not been labeled at this point.}
  \label{fig:Bild5}
  \end{figure} Now we simplify this problem without essentially changing the number of permissible labelings. First note that rejecting the second condition changes the number of labelings of $i^{(s)}_1\le_s\dots\le_si^{(s)}_{\deg(s)-1}$ by $O_l(d^{\deg(s)-2})$. Then for this reduced problem, it is equivalent to evaluate the number of solutions to the equation
\begin{align}\label{eq: fir sim pro}
\sum_{t=1}^{\deg(s)}b_t=0 \ \ \text{restricted to $|b_t|\le d$ for all $t=1,\dots,\deg(s)$},
\end{align}
since $b_t$ is associated with $i^{(s)}_{t-1}-i^{(s)}_{t}$, where $i^{(s)}_{0}:=i^{(s)}_{\deg(s)}:=i^{(s)}_\text{parent}$. Let $a_t=b_t+d$ for all $t=1,\dots,\deg(s)$. Then, the number of solutions to \eqref{eq: fir sim pro} is the same as to the problem
\begin{align}\label{eq: sec sim pro}
\sum_{t=1}^{\deg(s)}a_t=\deg(s)d \ \ \text{restricted to $a_t=0,\dots,2d$ for all $t=1,\dots,\deg(s)$}.
\end{align}
The numbers of solutions to \eqref{eq: sec sim pro} and to
\begin{align}\label{eq: thi sim pro}
\sum_{t=1}^{\deg(s)}a_t=\deg(s)d \ \ \text{restricted to $a_t=1,\dots,2d$ for all $t=1,\dots,\deg(s)$}
\end{align}
differ by $O_l(d^{\deg(s)-2})$. Putting all those labelings together proves the claim. 
\end{proof}



\section{Main result}\label{section: main}
Now we are able to state and prove the main result of this article. 
\begin{theorem}\label{the: band}
Let $\big(\boldsymbol X^{(p,n)}\big)_{p,n}$ be a sequence of random matrices $\boldsymbol X^{(p,n)}\in\R^{p\times n}$ with independent entries $X^{(p,n)}_{ik}$  with mean $0$ and variance $1$. Additionally, suppose that for any $\eta>0$,
\begin{align}
\frac{1}{\eta^2np}\sum_{i=1}^p\sum_{k=1}^n\E\left(\left|X^{(p,n)}_{i,k}\right|^2\ind\left\{\left|X^{(p,n)}_{i,k}\right|\ge\eta\sqrt{n}\right\}\right)\longrightarrow 0. \label{eq: mom}
\end{align}
Denote 
\begin{align}
\boldsymbol S^{(p,n)}:= \left(\frac{1}{n}\boldsymbol X^{(p,n)}\left(\boldsymbol X^{(p,n)}\right)'\right)\circ \boldsymbol1_{d}. \label{eq: S}
\end{align}
Then, the sequence of empirical spectral distributions $\mu^{(p,n)}=p^{-1}\sum_{i=1}^p\delta_{\lambda_i(S^{(p,n)})}$ almost surely converges weakly to a measure $\mu$, as $n\to \infty$ or $p\to\infty$, while $d\to\infty$, $\frac{n}{p}\to 0,$ and $\frac{2d}{n}\to y>0$. The $l$-th moment $m_l,l\in\N,$ of the limiting spectral distribution is given by
\begin{equation*}
m_l=\sum_{G(i^c,k^c)}\prod_{k^\ast}\sum_{j=0}^{\deg(k^\ast)}\ind\{k^\ast>2j\}(-1)^j(y(\deg(k^\ast-2j)))^{\deg(k^\ast)-1}\frac{\deg(k^\ast)}{j!(\deg(k^\ast)-j)!},
\end{equation*}
where the outer sum runs over all canonical ordered trees $G(i^c,k^c)$ with $l+1$ vertices such that the part containing the root lies on the $I$-line and the other part on the $K$-line, and the product runs over all vertices $k^\ast\in\{k_1^c,\dots,k_l^c\}$.
\end{theorem}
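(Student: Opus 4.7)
The plan is to carry out a method-of-moments argument, leveraging the combinatorics of $d$-banded ordered trees developed in Section~\ref{section: combinatorial}. I would begin with the customary truncation/centering/rescaling reduction: using the Lindeberg-type hypothesis \eqref{eq: mom}, replace $X^{(p,n)}_{i,k}$ by $\tilde{X}^{(p,n)}_{i,k}:=X^{(p,n)}_{i,k}\ind\{|X^{(p,n)}_{i,k}|\le\eta_n\sqrt{n}\}$ with $\eta_n\downarrow 0$ slowly enough, then recentre and rescale to unit variance. A Frobenius-norm estimate on the induced perturbation of $\boldsymbol{S}^{(p,n)}$, fed into Corollary~\ref{corollary: A41}, together with Theorem~\ref{theorem: A43} for the components better handled by a rank argument, shows that the L\'evy distance between the original and the modified ESDs tends to zero almost surely; I may thus assume $|X^{(p,n)}_{i,k}|\le\eta_n\sqrt{n}$, mean $0$, variance $1$.

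Next I would analyse $\E\bigl[\tfrac{1}{p}\tr(\boldsymbol{S}^{(p,n)})^{l}\bigr]$ through the walk expansion
\[
\E\tr(\boldsymbol{S}^{(p,n)})^{l}=\frac{1}{n^{l}}\sum_{i\in[p]^{l},\,k\in[n]^{l}}\prod_{s=1}^{l}\ind\{|i_{s}-i_{s+1}|\le d\}\,\E\prod_{s=1}^{l}X_{i_{s}k_{s}}X_{i_{s+1}k_{s}},
\]
with the convention $i_{l+1}:=i_{1}$. Each tuple $(i,k)$ determines a bipartite multigraph on the $I$- and $K$-lines with edges $\{i_{s},k_{s}\},\{i_{s+1},k_{s}\}$; by centering and independence, the expectation vanishes unless every edge appears at least twice. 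A degree-of-freedom count (each new $I$-line vertex costs only $O(d)$ by the banded constraint, each new $K$-line vertex costs $O(n)$, while every cycle-creating identification or edge repetition beyond two costs at least one extra factor of $n^{-1}$ or $d^{-1}$) shows that the dominant contribution comes exactly from tuples realising canonical walks around the $d$-banded ordered trees in $\mathcal{B}_{p,n,d,l+1}$, with each such term contributing $1+o(1)$ after truncation.

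Grouping by isomorphism class and invoking Lemma~\ref{lemma: banded trees}, a class whose canonical representative has $r+1$ vertices on the $I$-line contributes
\[
\frac{|[G(i^{c},k^{c})]|}{pn^{l}}=\frac{1}{n^{r}}\prod_{k^{\ast}\in[l-r]}F(\deg(k^{\ast})d,\deg(k^{\ast}),2d)+o(1),
\]
where I used $\sum_{k^{\ast}}(\deg(k^{\ast})-1)=r$ so that $n^{r}=\prod_{k^{\ast}}n^{\deg(k^{\ast})-1}$. Substituting Abramson's closed form $F(n,k,m)=\sum_{j}(-1)^{j}\binom{k}{j}\binom{n-jm-1}{k-1}$ with $n=\deg(k^{\ast})d$ and $m=2d$, and passing to the limit with $2d/n\to y$, the $k^{\ast}$-factor becomes
\[
\sum_{j=0}^{\deg(k^{\ast})}\ind\{\deg(k^{\ast})>2j\}(-1)^{j}\,\frac{\deg(k^{\ast})}{j!(\deg(k^{\ast})-j)!}\,\bigl(y(\deg(k^{\ast})-2j)\bigr)^{\deg(k^{\ast})-1},
\]
matching the factor in the theorem (the indicator appears because $\binom{(\deg-2j)d-1}{\deg-1}=0$ whenever $\deg\le 2j$). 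Summing over canonical ordered trees yields the claimed formula for $m_{l}$.

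For the almost-sure statement I would run the analogous walk expansion on the variance: only tuples whose underlying bipartite graphs share an edge contribute, and every such gluing costs at least one degree of freedom in $p$, $d$ or $n$, so $\Var(\tfrac{1}{p}\tr(\boldsymbol{S}^{(p,n)})^{l})$ is summable in $p$ (using $n/p\to 0$) and Borel--Cantelli delivers almost-sure convergence of every moment. The bound $\deg(k^{\ast})\le l$ combined with a Catalan-type count of canonical ordered trees yields $m_{l}\le C(y)^{l}$, so Carleman's condition holds and the moment-convergence theorem pins down the unique limit $\mu$. I expect the main obstacle to be the tree-dominance argument in the expected-moment step: in the ultra-high regime $n/p\to 0$ with $d=\Theta(n)$, $I$-line vertices are still counted against $p$ but each transition of a walk has only $O(d)$ admissible targets, so the classical bookkeeping must be rearranged carefully to confirm that every non-tree identification genuinely loses a factor of order $d/p$ or $n/p$ that vanishes in the limit.
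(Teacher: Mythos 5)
Your plan is the same as the paper's in outline: truncate at level $\eta_n\sqrt n$ using the Lindeberg condition, recentre and rescale, run the trace-moment walk expansion, group by isomorphism classes, apply Lemma~\ref{lemma: banded trees} and Abramson's formula for $F(n,k,m)$, and close with Carleman and Borel--Cantelli. The combinatorial heart (identity $\sum_{k^\ast}(\deg(k^\ast)-1)=r$, passage from $n^{-r}\prod_{k^\ast}F(\deg(k^\ast)d,\deg(k^\ast),2d)$ to the stated product, and the role of the vanishing binomial coefficient for $\deg(k^\ast)\le 2j$) is handled correctly and matches the paper.

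There are, however, two concrete places where your sketch papers over genuine difficulties that the paper spends real effort on.

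First, the centering and standardization step is not a routine Frobenius-plus-rank argument in this regime. Because $n/p\to 0$, the rows dominate the columns and the classical bookkeeping from \cite{Bai2010} does not transfer directly; the paper explicitly remarks that ``the arguments for centralization and standardization need to be refined.'' The actual proof splits into the sub-cases $n\ge p/(\log p)^2$ and $n\le p/(\log p)^2$. In the latter (which is the genuinely ultra-high-dimensional case), the key new ingredient is covering the band by two shifted block structures along index sets $V_k^{(1)},V_k^{(2)}$, exploiting the mutual independence of the $(2d+1)\times(2d+1)$ blocks, and combining a per-block trace bound with Hoeffding's inequality to control how many blocks misbehave; only then are Corollary~\ref{corollary: A41} and Theorem~\ref{theorem: A43} invoked on the reduced matrices. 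The same block trick recurs for rescaling the diagonal (there with $\alpha=1$ in Theorem~\ref{theorem: A38} and a Lidskii--Wielandt bound). Without this device your perturbation bound is uniform over rows rather than blocks and does not yield summability over $p$.

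Second, the almost-sure step cannot be run on the variance. The paper estimates the fourth centred moment of $m_{p,l}$; the dominant admissible graphs (two connected components, each edge hit at least twice, $|V|\le|E|+2$) give
\begin{equation*}
\E\left(m_{p,l}-\E m_{p,l}\right)^4\le 32\,l\,C_l\,p^{-2},
\end{equation*}
with $C_l$ depending only on $l$ and $\sup_p d/n$, which is summable in $p$. Repeating the same graph count at the level of the variance yields instead a single connected component and a bound of order $p^{-1}\eta_n^2$: the fluctuation comes from one edge of multiplicity four, and its fourth moment after truncation is only $O(\eta_n^2 n)$, leaving a net $p^{-1}\eta_n^{2}$ after the $p^{-2}n^{-2l}$ normalization. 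Since $\eta_n$ is permitted to decay as slowly as $1/\log n$ and $n$ may grow arbitrarily slowly relative to $p$, $\sum_p p^{-1}\eta_n^{2}$ is not summable in general; your appeal to ``$n/p\to 0$'' does not rescue it, because the loss in the variance comes from a $p$, not a $p^2$, degree of freedom. You need the fourth moment to get a uniform $p^{-2}$.
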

\noindent Note that by bounding the quantities $F(n,k,m)$ by $m^{k-1}$ it follows from the proof of Theorem \ref{the: band}
$$m_l\le \sum_{r=0}^{l-1}\frac{1}{r+1}\binom{l}{r}\binom{l-1}{r}y^{r}.$$
The right hand-side is the $l$-th moment of the Mar\v{c}enko-Pastur distribution with parameter $y$ which implies that the random variable $X\sim \mu$ is bounded in absolute value by $(1+\sqrt{y})^2$ since
$$\P\left(|X|\ge x\right)\le \frac{\E X^{2l}}{x^{2l}}\le \frac{1}{x^{2l}}\sum_{r=0}^{2l-1}\frac{1}{r+1}\binom{2l}{r}\binom{2l-1}{r}y^{r}\overset{l\to\infty}{\longrightarrow} 0$$
for any $x>(1+\sqrt{y})^2$. Beyond that, so far there is nothing known about the distribution $\mu$. Especially, the natural questions, for which $y>0$ the random variable $X$ is negative with positive probability, and whether the bound $(1+\sqrt{y})^2$ on the support is sharp, are open. The answers to both problems are essential groundwork to understand the asymptotical behavior of the extreme eigenvalues of high-dimensional banded sample covariance matrices, cf. \cite{Bai1993} for the almost sure limits of the extreme eigenvalues of high-dimensional sample covariance matrices. 
\begin{figure}[htbp] 
  \centering
     \includegraphics[width=0.8\textwidth]{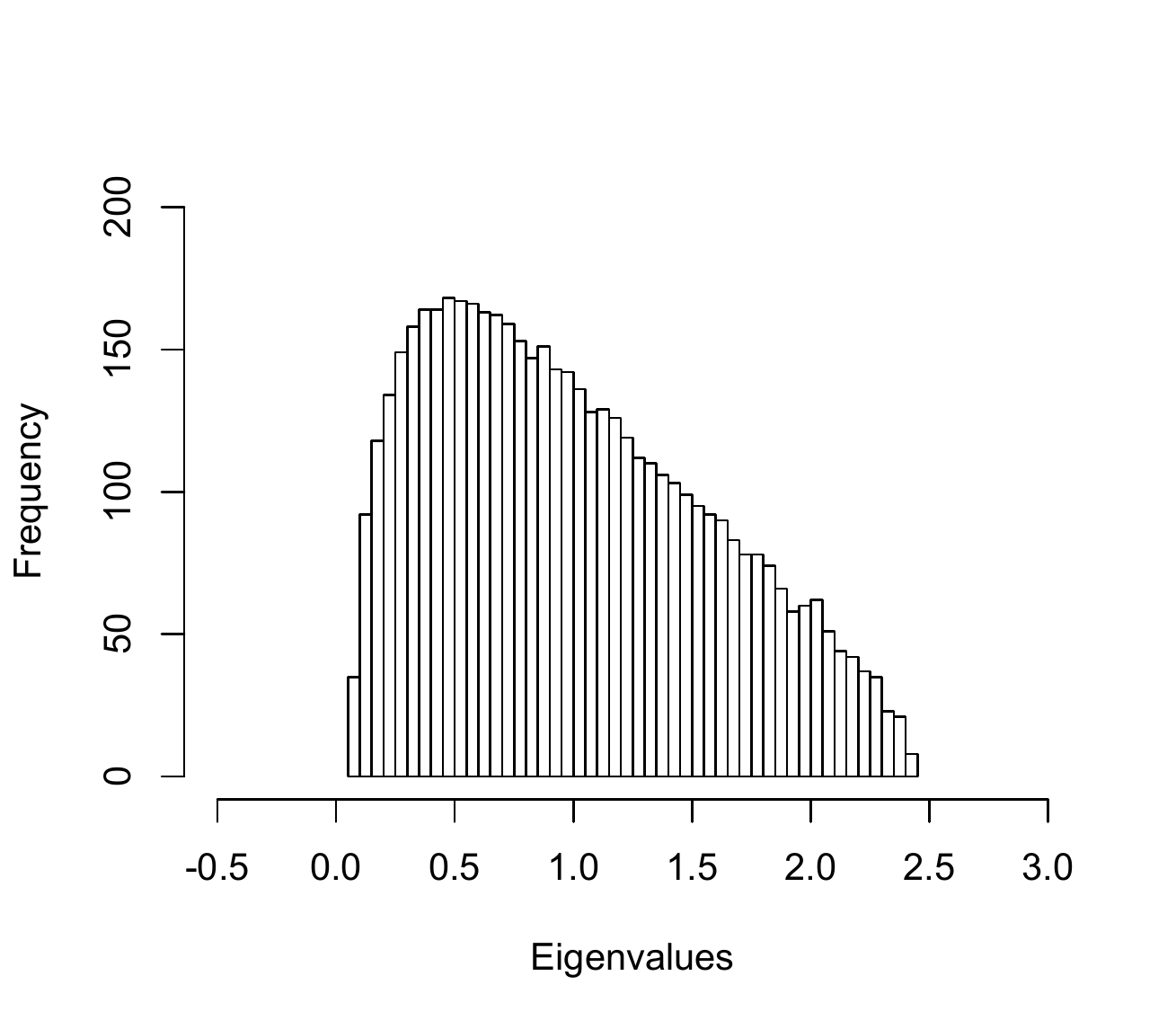}
  \caption{Histogram of the eigenvalues of a $5000\times 5000$ sample covariance with bandwidth $100$ based on $600$ samples from the standard normal distribution} 
  \label{fig:plot 1}
  \end{figure}
    \begin{figure}[htbp] 
  \centering
     \includegraphics[width=\textwidth]{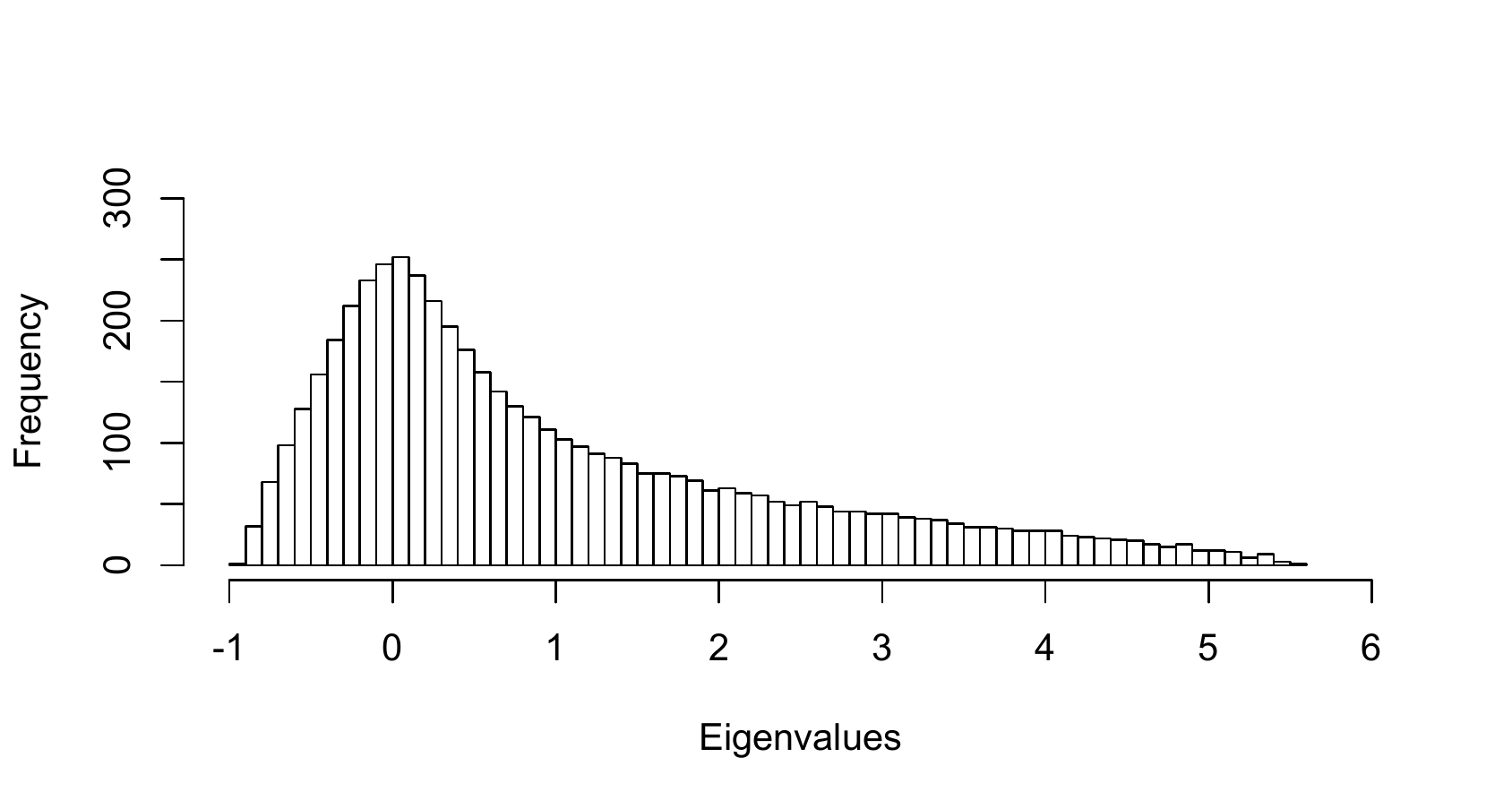}
  \caption{Histogram of the eigenvalues of a $5000\times 5000$ sample covariance with bandwidth $300$ based on $300$ samples from the standard normal distribution} 
  \label{fig:plot 2}
  \end{figure}
  \begin{proof}
For ease of notation write $\boldsymbol X$ and $\boldsymbol S$ instead of  $\boldsymbol X^{(p,n)}$ and $\boldsymbol S^{(p,n)}$. Accordingly, the entries of $\boldsymbol X$ and $\boldsymbol S$ are denoted by $X_{ik}$ and $S_{ik}$. Since in the situation of the theorem almost sure convergence for $p\to\infty$ is a stronger statement than for $n\to\infty$, we restrict the proof to the case $p\to\infty$, while $\frac{n}{p}\to 0,$ and $\frac{2d}{n}\to y$. First, we choose a sequence $\eta_n\to 0$ such that
$$
\frac{1}{\eta_n^2np}\sum_{i=1}^p\sum_{k=1}^n\E\left(\left|X^{(p,n)}_{i,k}\right|^2\ind\left\{\left|X^{(p,n)}_{i,k}\right|\ge\eta_n\sqrt{n}\right\}\right)\longrightarrow 0,
$$
and $\eta_n\ge \frac{1}{\log n}$, $n\ge 2$. As in the proof of the Theorem 3.10 in \cite{Bai2010} we start with the step of truncation, centralization and standardization which allows to work with a simplified matrix afterwards. 
Whereas the arguments for truncation at the level $\eta_n\sqrt{n}$ may be transferred almost analogously to the matrix $\boldsymbol S$, the arguments for centralization and standardization need to be refined. Then, the convergence of the expected moments of the empirical spectral distribution is shown by means of Lemma \ref{lemma: banded trees} which is crucial at this step. Finally, we have to prove that the fluctuation of the moments of the empirical spectral distribution almost surely converge to zero. This may be done similarly as in \cite{Bai2010}  for Wigner matrices.
\subsection{Truncation, centralization and standardization}
Let $\boldsymbol{\tilde X}$ be the matrix with entries $\tilde X_{ik}:= X_{ik}\ind \{|X_{ik}|\le \eta_n \sqrt{n}\}$ and $\boldsymbol{\tilde S}$ be the matrix defined by the right hand side of \eqref{eq: S}, where $\boldsymbol X$ is replaced by $\boldsymbol {\tilde X}$. Then,
\begin{align}
d_K\left( \mu^{\boldsymbol S},\mu^{\boldsymbol {\tilde S}} \right)&\le \frac{1}{p}\rank\left( \boldsymbol S-\boldsymbol{\tilde S}\right)\\
&\le \frac{1}{p}\rank\left( \left(\left(\boldsymbol X-\boldsymbol{\tilde X}\right)\boldsymbol X'+\boldsymbol {\tilde X} \left(\boldsymbol X-\boldsymbol{\tilde X}\right)'\right)\circ \boldsymbol{1}_d\right)\\
&\le \frac{2}{p}\sum_{i=1}^p\sum_{k=1}^n \ind\left\{\left|X_{ik}\right|>\eta_n\sqrt{n}\right\},
\end{align}
where Theorem \ref{theorem: A43} is used in the first line, and the last line follows by subadditivity of the rank and the fact that $\boldsymbol X-\boldsymbol {\tilde X}$ has no more than 
$$\sum_{i=1}^p\sum_{k=1}^n \ind\left\{\left|X_{ik}\right|>\eta_n\sqrt{n}\right\}$$
non-zero rows. Hence, it remains to prove that
\begin{align}
\frac{2}{p}\sum_{i=1}^p\sum_{k=1}^n \ind\left\{\left|X_{ik}\right|>\eta_n\sqrt{n}\right\}\overset{a.s.}{\longrightarrow} 0 \ \ \text{as }p\to\infty.
\end{align}
Analogously to page 27 in \cite{Bai2010} we have by \eqref{eq: mom}
\begin{align}
\frac{2}{p}\sum_{i=1}^p\sum_{k=1}^n \E\ind\left\{\left|X_{ik}\right|>\eta_n\sqrt{n}\right\}=o(1)
\end{align}
and 
\begin{align}
\Var\left(\frac{2}{p}\sum_{i=1}^p\sum_{k=1}^n\ind\left\{\left|X_{ik}\right|>\eta_n\sqrt{n}\right\}\right)=o\left(\frac{1}{p}\right),
\end{align}
such that by Bernstein's inequality and the Borel-Cantelli lemma for any $\varepsilon>0$
\begin{align}
\limsup_{p \to\infty} \frac{2}{p}\sum_{i=1}^p\sum_{k=1}^n\ind\left\{\left|X_{ik}\right|>\eta_n\sqrt{n}\right\}\le \varepsilon \ \ \text{a.s.}
\end{align}
Therefore,
\begin{align}
d_K\left(F^{\boldsymbol S},F^{\boldsymbol {\tilde S}} \right)\overset{\text{a.s.}}{\longrightarrow} 0 \ \ \text{as }p\to\infty.
\end{align}
Redefine $\boldsymbol{\tilde X}$ by $\boldsymbol X$ and $\boldsymbol {\tilde S}$ by $\boldsymbol S$. Now, we prove that we may recenter the entries of the matrix $\boldsymbol X$. We have
\begin{align}
&d_L^2\left(F^{\boldsymbol S},F^{\left(\frac{1}{n}\left(\boldsymbol X-\E \boldsymbol X\right)\left(\boldsymbol X -\E \boldsymbol X\right)'\right)\circ \boldsymbol 1_d}\right)\\
&\hspace{0.5cm}\le 2d_L^2\left(F^{\boldsymbol S},F^{\left(\frac{1}{n}\boldsymbol X\boldsymbol X'+\frac{1}{n}\boldsymbol X\E\boldsymbol X'+\frac{1}{n}\left(\E\boldsymbol X\right)\boldsymbol X'\right)\circ \boldsymbol 1_d}\right)\label{eq: cen fir}\\
&\hspace{1cm}+2d_L^2\left(F^{\left(\frac{1}{n}\boldsymbol X\boldsymbol X'+\frac{1}{n}\boldsymbol X\E\boldsymbol X'+\frac{1}{n}\left(\E\boldsymbol X\right)\boldsymbol X'\right)\circ \boldsymbol 1_d},F^{\left(\frac{1}{n}\left(\boldsymbol X-\E \boldsymbol X\right)\left(\boldsymbol X -\E \boldsymbol X\right)'\right)\circ \boldsymbol 1_d}\right).\label{eq: cen sec}
\end{align}
By Corollary \ref{corollary: A41} for the term \eqref{eq: cen sec} holds
\begin{align}
\eqref{eq: cen sec}^{3/2}&\le \frac{2\sqrt{2}}{p}\tr\left[\left(\frac{1}{n}\left(\E \boldsymbol X\left(\E \boldsymbol X' \right)\right)\circ \boldsymbol 1_d\right)^2\right]\\
&=\frac{2\sqrt{2}}{pn^2}\sum_{i,j=1}^p\left(\sum_{k=1}^n \E X_{ik}\E X_{jk}\right)^2\ind\{|i-j|\le d\}\\
&\le \frac{2\sqrt{2}(2d+1)}{n^2\eta_n^{4}}\rightarrow 0,
\end{align}
where the last line follows by the inequality $|\E X_{ik}|\le \eta_n^{-1}n^{-1/2}$.
To evaluate the term \eqref{eq: cen fir} we combine Corollary \ref{corollary: A41} and Theorem \ref{theorem: A43}. Thereto, we prove first that there are not to many rows $i$ in the matrix $\boldsymbol X$ which suffice 
\begin{align}
\sum_{j=1}^p\left(\sum_{k=1}^n X_{ik}\E X_{jk}\right)^2\ind\left\{|i-j|\le d\right\}\ge \frac{n}{\eta_n^5}.
\end{align}
By the union bound and Markov's inequality,
\begin{align}
&\P\left(\sum_{j=1}^p\left(\sum_{k=1}^n X_{ik}\E X_{jk}\right)^2\ind\left\{|i-j|\le d\right\}\ge \frac{n}{\eta_n^5}\right)\\
&\hspace{0.5cm}\le \P\left(\sum_{j=1}^p\sum_{k=1}^n X_{ik}^2\left(\E X_{jk}\right)^2\ind\left\{|i-j|\le d\right\}\ge \frac{n}{2\eta_n^5}\right)\\
&\hspace{0.8cm}+\P\left(\sum_{j=1}^p\sum_{\substack{k_1,k_2=1\\ k_1\neq k_2}}^n X_{ik_1}X_{ik_2}\E X_{jk_1}\E X_{jk_2}\ind\left\{|i-j|\le d\right\}\ge \frac{n}{2\eta_n^5}\right)\\
&\hspace{0.5cm}\le \frac{2\eta_n^5\E\left[\sum_{j=1}^p\sum_{k=1}^n X_{ik}^2\left(\E X_{jk}\right)^2\ind\left\{|i-j|\le d\right\}\right]}{n}\\
&\hspace{0.8cm}+\frac{4\eta_n^{10}\E\left[\left(\sum_{j=1}^p\sum_{k_1\neq k_2} X_{ik_1}X_{ik_2}\E X_{jk_1}\E X_{jk_2}\ind\left\{|i-j|\le d\right\}\right)^2\right]}{n^2}\\
&\hspace{0.5cm}\le \frac{2(2d+1)\eta_n^3}{n}+\frac{8\eta_n^6(2d+1)^2}{n^2}\left(2+\frac{4}{\eta_n^2}+\frac{1}{\eta_n^4}\right).
\end{align}
Thus, by Hoeffding's inequality for sufficiently large $p$
\begin{align*}
&\P\left(\sum_{i=1}^p\ind\left\{\sum_{j=1}^p\left(\sum_{k=1}^n X_{ik}\E X_{jk}\right)^2\ind\left\{|i-j|\le d\right\}\ge \frac{n}{\eta_n^5}\right\}\ge p\sqrt{\eta_n}\right)\\
&\hspace{0.5cm}\le \P\Bigg[\sum_{i=1}^p\Bigg(\ind\Bigg\{\sum_{j=1}^p\left(\sum_{k=1}^n X_{ik}\E X_{jk}\right)^2\ind\left\{|i-j|\le d\right\}\ge\frac{n}{\eta_n^5}\Bigg\}\\
&\hspace{0.8cm}-\E\ind\Bigg\{\sum_{j=1}^p\left(\sum_{k=1}^n X_{ik}\E X_{jk}\right)^2\ind\left\{|i-j|\le d\right\}\ge\frac{n}{\eta_n^5}\Bigg\}\Bigg)\\
&\hspace{1.1cm}\ge p\left(\sqrt{\eta_n}-\frac{4d\eta_n}{n}-\frac{16\eta_n^2(2d+1)^2}{n^2}\right)\Bigg]\\
&\hspace{0.5cm}\le \exp\left(-2p\left(\sqrt{\eta_n}-\frac{4d\eta_n}{n}-\frac{16\eta_n^2(2d+1)^2}{n^2}\right)^2\right).
\end{align*}
The last line is summable over $p$. Hence, by the Borel-Cantelli lemma 
\begin{align}
\frac{1}{p}\sum_{i=1}^p\ind\left\{\sum_{j=1}^p\left(\sum_{k=1}^n X_{ik}\E X_{jk}\right)^2\ind\left\{|i-j|\le d\right\}\ge \frac{n}{\eta_n^5}\right\}\overset{\text{a.s.}}{\longrightarrow} 0 \ \ \text{as }p\to\infty.\label{eq: con}
\end{align}
Let $\boldsymbol{\tilde X}\in\R^{p\times n}$ be the matrix with entries 
\begin{align}
\tilde X_{ik}:=X_{ik}\ind\left\{\sum_{j=1}^p\left(\sum_{k=1}^n X_{ik}\E X_{jk}\right)^2\ind\left\{|i-j|\le d\right\}< \frac{n}{\eta_n^5}\right\}.
\end{align}
Then, we obtain
\begin{align*}
\eqref{eq: cen fir}&\le 4d_L^2\left(F^{\boldsymbol S},F^{\left(\frac{1}{n}\boldsymbol X\boldsymbol X'+\frac{1}{n}\boldsymbol{\tilde X}\E\boldsymbol X'+\frac{1}{n}\left(\E\boldsymbol X\right)\boldsymbol{\tilde X}'\right)\circ \boldsymbol 1_d}\right)\\
&\hspace{0.5cm}+4d_L^2\left(F^{\left(\frac{1}{n}\boldsymbol X\boldsymbol X'+\frac{1}{n}\boldsymbol{X}\E\boldsymbol X'+\frac{1}{n}\left(\E\boldsymbol X\right)\boldsymbol{X}'\right)\circ \boldsymbol 1_d},F^{\left(\frac{1}{n}\boldsymbol X\boldsymbol X'+\frac{1}{n}\boldsymbol{\tilde X}\E\boldsymbol X'+\frac{1}{n}\left(\E\boldsymbol X\right)\boldsymbol{\tilde X}'\right)\circ \boldsymbol 1_d}\right)\\
&\le 4\left(\frac{1}{p}\tr\left[\left(\frac{1}{n}\left(\boldsymbol{\tilde X}\E \boldsymbol X'+(\E \boldsymbol X)\boldsymbol{\tilde X}'\right)\circ\boldsymbol 1_d\right)^2\right]\right)^{2/3}\\
&\hspace{0.5cm}+4\left(\frac{1}{p}\rank\left[\left(\left(\boldsymbol{\tilde X}-\boldsymbol X\right)\E \boldsymbol X'+\E \boldsymbol X\left(\boldsymbol{\tilde X}-\boldsymbol X\right)'\right)\circ\boldsymbol 1_d\right]\right)^2
\end{align*}
By \eqref{eq: con} the summand in the last line vanishes asymptotically almost surely, whereas for the first term we have
\begin{align}
 &\frac{1}{p}\tr\left[\left(\frac{1}{n}\left(\boldsymbol{\tilde X}\E \boldsymbol X'+(\E \boldsymbol X)\boldsymbol{\tilde X}'\right)\circ\boldsymbol 1_d\right)^2\right]\\
 &\hspace{0.5cm}\le \frac{4}{pn^2}\sum_{i,j=1}^p\left(\sum_{k=1}^n\tilde X_{ik}\E X_{jk}\right)^2\ind\left\{|i-j|\le d\right\}\le \frac{4}{n\eta_n^5}\rightarrow 0.
\end{align}
So,
\begin{align}
d_L\left(F^{\boldsymbol S},F^{\left(\frac{1}{n}\left(\boldsymbol X-\E \boldsymbol X\right)\left(\boldsymbol X -\E \boldsymbol X\right)'\right)\circ \boldsymbol 1_d}\right)\overset{\text{a.s.}}{\longrightarrow}0\ \ \text{as }p\to\infty.
\end{align}
Subsequently denote $\boldsymbol X-\E \boldsymbol X$ by $\boldsymbol X$. It remains to standardize the matrix $\boldsymbol X$. In fact, we do not standardize all entries of $\boldsymbol X$ but those with  
\begin{align}
\sigma_{ik}^2:= \E X_{ik}^2> \eta_n. \label{eq: low bou}
\end{align} 
In particular, by condition \eqref{eq: mom} only $o(np)$ entries do not satisfy \eqref{eq: low bou}. Without loss of generality we may assume that either
\begin{align*}
n\le \frac{p}{(\log p)^2} \ \ \text{or} \ \ n\ge \frac{p}{(\log p)^2}
\end{align*}
holds on the whole sequence. Let $\boldsymbol{\tilde S}$ be the matrix with entries $\tilde S_{ii}:=S_{ii}$, $i=1,...,p$, and 
$$ \tilde S_{ij}:=\frac{1}{n}\sum_{k=1}^n \tilde\sigma_{ik}^{-1}\tilde\sigma_{jk}^{-1} X_{ik}X_{jk}\ind\{|i-j|\le d\},~i\neq j.$$
where
$$\tilde\sigma_{ik}:=\tilde\sigma_{ik,n}:=\sigma_{ik}\ind\{\sigma_{ik}^2> \eta_n\}+\ind\{\sigma_{ik}^2\le \eta_n\}.$$
First consider the case $n\ge p/(\log p)^2$. Define
\begin{align*}
I_1&:=\frac{1}{pn^2}\sum_{\substack{i,j=1\\ i\neq j}}^p\sum_{k=1}^n \left(\tilde\sigma_{ik}^{-1}\tilde\sigma_{jk}^{-1}-1\right)^2 \left (X_{ik}^2-\E X_{ik}^2\right)\left(X_{jk}^2-\E X_{jk}^2\right)\ind\{|i-j|\le d\}\\
I_2&:=\frac{2}{pn^2}\sum_{\substack{i,j=1\\ i\neq j}}^p\sum_{k=1}^n \left(\tilde\sigma_{ik}^{-1}\tilde\sigma_{jk}^{-1}-1\right)^2  \left (X_{ik}^2-\E X_{ik}^2\right) \E X_{jk}^2\ind\{|i-j|\le d\}\\
I_3&:=\frac{1}{pn^2}\sum_{\substack{i,j=1\\ i\neq j}}^p\sum_{k=1}^n\left(\tilde\sigma_{ik}^{-1}\tilde\sigma_{jk}^{-1}-1\right)^2  \E X_{ik}^2\E X_{jk}^2\ind\{|i-j|\le d\}\\
I_4&:=\frac{1}{pn^2}\sum_{\substack{i,j=1\\ i\neq j}}^p\sum_{\substack{k_1,k_2=1\\ k_1\neq k_2}}^n \left(\tilde\sigma_{ik_1}^{-1}\tilde\sigma_{jk_1}^{-1}-1\right)\left(\tilde\sigma_{ik_2}^{-1}\tilde\sigma_{jk_2}^{-1}-1\right) 
X_{ik_1}X_{jk_1}X_{ik_2}X_{jk_2}\\
&\hspace{5cm}\times\ind\{|i-j|\le d\}.
\end{align*}
By Corollary \ref{corollary: A41} we have
\begin{align*}
d_L^3\left(F^{\boldsymbol S},F^{\boldsymbol{\tilde S}}\right)&\le \frac{1}{pn^2}\sum_{\substack{i,j=1\\ i\neq j}}^p\left(\sum_{k=1}^n \left(\tilde\sigma_{ik}^{-1}\tilde\sigma_{jk}^{-1}-1\right) X_{ik}X_{jk}\ind\{|i-j|\le d\} \right)^2\\
&=I_1+I_2+I_3+I_4.
\end{align*}
First note that by \eqref{eq: mom} and by the inequality $\E X_{ik}^2\le 1$ the term $I_3$ satisfies
\begin{align*}
I_3&\le \frac{1}{pn^2}\sum_{\substack{i,j=1\\i\neq j}}^p\sum_{k=1}^n\left(1-\sigma_{ik}\sigma_{jk}\right)^2\ind\{|i-j|\ge d\}\\
&\le \frac{1}{pn^2}\sum_{\substack{i,j=1\\i\neq j}}^p\sum_{k=1}^n\left(1-\E X_{ik}^2\E X_{jk}^2\right)\ind\{|i-j|\ge d\}\\
&= \frac{1}{pn^2}\sum_{\substack{i,j=1\\i\neq j}}^p\sum_{k=1}^n\left(1-\E X_{ik}^2+\E X_{ik}^2\left(1-\E X_{jk}^2\right)\right)\ind\{|i-j|\ge d\}\\
&\le \frac{4d}{pn^2}\sum_{i=1}^p\sum_{k=1}^n(1-\E X_{ik}^2)\rightarrow 0. 
\end{align*}
Let $\varepsilon>0$ and $p\in\N$ sufficiently large such that $I_3\le \varepsilon/4$. Then,
\begin{align*}
&\P\left(d_L\left(F^{\tilde S},F^S\right)\ge \varepsilon\right)\le \P\left(I_1\ge\frac{\varepsilon}{4}\right)+\P\left(I_2\ge\frac{\varepsilon}{4}\right)+\P\left(I_4\ge\frac{\varepsilon}{4}\right).
\end{align*}
We will use Markov's inequality to bound each of the three probabilities on the right hand side. Denote $Y_{ik}:=X_{ik}^2-\E X_{ik}^2$ and observe that
\begin{align}\label{eq: exp Y}
\E |Y_{ik}|^m\le 2\eta_n^{2(m-1)}n^{m-1},~~m\in\N.
\end{align}
Then for $p$ sufficiently large,
\begin{align*}
\E|I_1|^4&\le \frac{1}{\eta_n^{8}n^8p^4}\sum_{\substack{i_1,j_1=1\\ i_1\neq j_1}}^p\sum_{\substack{i_2,j_2=1\\ i_2\neq j_2}}^p\sum_{\substack{i_3,j_3=1\\ i_3\neq j_3}}^p\sum_{\substack{i_4,j_4=1\\ i_4\neq j_4}}^p\sum_{\substack{k_1,k_2,\\k_3,k_4=1}}^n\left|\E  \prod_{l=1}^4Y_{i_lk_l}Y_{j_lk_l}\ind\{|i_l-j_l|\le d\}\right| \\
&\le\frac{1}{\eta_n^{8}n^8p^4}\sum_{\substack{i_1,j_1=1\\ i_1\neq j_1}}^p\sum_{\substack{i_2,j_2=1\\ i_2\neq j_2}}^p\sum_{\substack{i_3,j_3=1\\ i_3\neq j_3}}^p\sum_{\substack{i_4,j_4=1\\ i_4\neq j_4}}^p\sum_{k=1}^n\left|\E  \prod_{l=1}^4Y_{i_lk}Y_{j_lk}\ind\{|i_l-j_l|\le d\}\right| \\
&\hspace{0.5cm}+\frac{3}{\eta_n^{8}n^8p^4}\sum_{\substack{i_1,j_1=1\\ i_1\neq j_1}}^p\sum_{\substack{i_2,j_2=1\\ i_2\neq j_2}}^p\sum_{\substack{i_3,j_3=1\\ i_3\neq j_3}}^p\sum_{\substack{i_4,j_4=1\\ i_4\neq j_4}}^p\sum_{\substack{k_1,k_2=1 \\ k_1\neq k_2}}^n\left\arrowvert \E  Y_{i_1k_1}Y_{j_1k_1} Y_{i_2k_1}Y_{j_2k_1} \right\arrowvert \\  
&\hspace{4cm}\times\left\arrowvert  \E Y_{i_3k_2}Y_{j_3k_2}  Y_{i_4k_2}Y_{j_4k_2} \right\arrowvert \prod_{l=1}^4\ind\{|i_l-j_l|\le d\}\\
&\le\frac{8}{\eta_n^{8}n^8p^4}\sum_{\substack{i,j=1\\ i\neq j}}^p\sum_{k=1}^n\E  Y_{ik}^4 \E Y_{jk}^4\\
&\hspace{0.5cm}+\frac{1}{\eta_n^{8}n^8p^4}\underset{3\le |\{i_1,i_2,i_3,i_4,j_1,j_2,j_3,j_4\}|\le 4}{\sum_{\substack{i_1,j_1=1\\ i_1\neq j_1}}^p\sum_{\substack{i_2,j_2=1\\ i_2\neq j_2}}^p\sum_{\substack{i_3,j_3=1\\ i_3\neq j_3}}^p\sum_{\substack{i_4,j_4=1\\ i_4\neq j_4}}^p}\sum_{k=1}^n\left|\E  \prod_{l=1}^4Y_{i_lk}Y_{j_lk}\right|\\
&\hspace{0.5cm}+\frac{12}{\eta_n^{8}n^8p^4}\sum_{\substack{i_1,j_1=1\\ i_1\neq j_1}}^p\sum_{\substack{i_2,j_2=1\\ i_2\neq j_2}}^p\sum_{\substack{k_1,k_2=1 \\ k_1\neq k_2}}^n \E  Y_{i_1k_1}^2\E Y_{j_1k_1}^2  \E Y_{i_2k_2}^2\E Y_{j_2k_2}^2 \\
&\le C\left(\frac{\eta_n^4}{np^2} + \frac{\eta_n^2}{n^2}  +\frac{1}{n^2}\right),
\end{align*}
where $C>0$ is an absolute constant. 
Further,
\begin{align*}
\E|I_2|^4&\le \frac{8}{\eta_n^{8}n^8}\sum_{i_1,i_2,i_3,i_4=1}^p\sum_{k_1,k_2,k_3,k_4=1}^n\left|\E Y_{i_1k_1} Y_{i_2k_2}Y_{i_3k_3} Y_{i_4k_4}\right| \\
&\le \frac{16}{\eta_n^{8}n^8}\sum_{\substack{i=1}}^p\sum_{k=1}^n\E Y_{ik}^4+\frac{48}{\eta_n^{8}n^8}\sum_{\substack{i_1,i_2=1 \\ i_1\neq i_2}}^p\sum_{\substack{k=1}}^n\E Y_{i_1k_1}^2 \E Y_{i_2k_2}^2\\
&\hspace{1cm}+\frac{48}{\eta_n^{8}n^8}\sum_{\substack{i_1,i_2=1}}^p\sum_{\substack{k_1,k_2=1 \\ k_1\neq k_2}}^n\E Y_{i_1k_1}^2 \E Y_{i_2k_2}^2\\
&\le\frac{32p}{\eta_n^{2}n^4}+\frac{96p^2}{\eta_n^{4}n^5}+\frac{96p^2}{\eta_n^{4}n^4}
\end{align*}
and,
\begin{align*}
\E|I_4|^2&\le \frac{1}{\eta_n^8p^2n^4}\sum_{\substack{i_1,j_1=1\\i_1\neq j_1}}^p\sum_{\substack{i_2,j_2=1\\i_2\neq j_2}}^p\sum_{\substack{k_1,k_2=1 \\ k_1\neq k_2}}^n\sum_{\substack{k_3,k_4=1 \\ k_3\neq k_4}}^n\Big|\E X_{i_1k_1}X_{j_1k_1}X_{i_1k_2}X_{j_1k_2}\\
&\hspace{3cm}\times X_{i_2k_3}X_{j_2k_3}X_{i_2k_4}X_{j_2k_4} \Big| \\
&\le  \frac{4}{\eta_n^8p^2n^4}\sum_{\substack{i_1,j_1=1\\i_1\neq j_1}}^p\sum_{\substack{k_1,k_2=1 \\ k_1\neq k_2}}^n\E X_{i_1k_1}^2\E X_{i_1k_2}^2 \E X_{j_1k_1}^2\E X_{j_1k_2}^2 \\
&\le \frac{4}{\eta_n^8n^2}.
\end{align*}
Each of the three expressions is summable over $p$. Therefore, it holds that almost surely for $p$ sufficiently large $d_L(F^{\boldsymbol{\tilde S}},F^{\boldsymbol S})<\varepsilon$. This implies  $d_L(F^{\boldsymbol{\tilde S}},F^{\boldsymbol S})\rightarrow 0$ almost surely as $p\to\infty.$\\
 The arguments used here for $n\ge p/(\log p)^2$ are also applicable to the general case. However, this requires to evaluate the fourth moments of $I_1$ and $I_2$ more carefully and to deduce an appropriate bound for $\E |I_4|^4$. Possibly, the following arguments are more suitable for the case $n\le p/(\log p)^2$.\\ 
The essential idea is to cover the band structure of the matrix by a composed block structure, and then to exploit the independency of the submatrices of $\boldsymbol{S}-\boldsymbol{\tilde S}$ corresponding to a single block structure. Thereto, define index sets 
$$V^{(1)}_k=\{2kd-2d+k,2kd-2d+k+1,\dots,2(k+1)d-2d+k\}, \ \ k=1,...,\lfloor p/(2d+1)\rfloor,$$
and
$$V^{(2)}_k=\{2kd-d+k,2kd-d+k+1,\dots,2(k+1)d-d+k\}, \ \ k=1,...,\lfloor (p-d)/(2d+1)\rfloor,$$
Note that at most $2d$ rows in the lower right corner of the matrix might be not covered by the composed block structure.
  \begin{figure}[htbp] 
  \centering
     \includegraphics[width=0.5\textwidth]{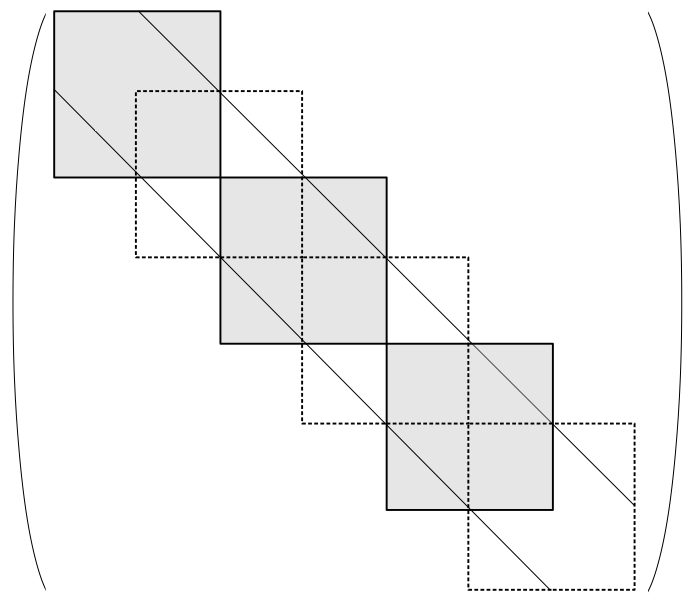}
  \caption{Band structure covered by two block structures} 
  \label{fig:Matrix}
  \end{figure}

\noindent Let $\boldsymbol{\tilde S}^{(1)}_k,{\boldsymbol S}^{(1)}_k\in\R^{(2d+1)\times (2d+1)}$,~$k=1,...,\lfloor p/(2d+1)\rfloor$ be the submatrices of $\boldsymbol {\tilde S}$ and $\boldsymbol S$ corresponding to the indices $V^{(1)}_k\times V^{(1)}_k$. Analogously, define the matrices $\boldsymbol{\tilde S}^{(2)}_k,\boldsymbol{S}^{(2)}_k\in\R^{(2d+1)\times (2d+1)}$ for $k=1,...,\lfloor (p-d)/(2d+1)\rfloor$. Then it holds for any $l\le 1,...,\lfloor p/(2d+1)\rfloor $, 
\begin{align}
\E\tr\left(\left(\boldsymbol{\tilde S}_l^{(1)}-\boldsymbol{S}_l^{(1)}\right)^2\right)&\le \frac{1}{n^2}\sum_{\substack{i,j\in V^{(1)}_l \\ i\neq j}}\sum_{k=1}^n\left(\tilde\sigma_{ik}^{-1}\tilde\sigma_{jk}^{-1}-1\right)^2\E X_{ik}^2\E X_{jk}^2 \\
&\le \frac{2d+1}{n}\frac{2}{n}\sum_{i\in V_l^{(1)}}\sum_{k=1}^n 1-\E X_{ik}^2, \label{eq: block A}
\end{align}
and analogously for $l=1,...,\lfloor (p-d)/(2d+1)\rfloor$
\begin{align*}
\E\tr\left(\left(\boldsymbol{\tilde S}_l^{(2)}-\boldsymbol{S}_l^{(2)}\right)^2\right)\le  \frac{2d+1}{n}\frac{2}{n}\sum_{i\in V_l^{(2)}}\sum_{k=1}^n 1-\E X_{ik}^2.
\end{align*}
We conclude by condition \eqref{eq: mom}, inequality \eqref{eq: block A}, and Markov's and Hoeffding's inequality for any $\varepsilon>0$ and $p$ sufficiently large,
\begin{align*}
&\P\left(\sum_{l=1}^{\lfloor p/(2d+1)\rfloor }\ind\left\{\tr\left(\left(\boldsymbol{\tilde S}_l^{(1)}-\boldsymbol S_l^{(1)}\right)^2\right)\ge (2d+1)\varepsilon\right\}\ge \varepsilon \lfloor p/(2d+1)\rfloor \right)\\
&\hspace{0.5cm}\le \P\Bigg(\sum_{l=1}^{\lfloor p/(2d+1)\rfloor }\bigg(\ind\left\{\tr\left(\left(\boldsymbol{\tilde S}_l^{(1)}-\boldsymbol{S}_l^{(1)}\right)^2\right)\ge (2d+1)\varepsilon\right\}\\
&\hspace{3cm}-\E \ind\left\{\tr\left(\left(\boldsymbol{\tilde S}_l^{(1)}-\boldsymbol{S}_l^{(1)}\right)^2\right)\ge (2d+1)\varepsilon\right\} \bigg)\\
&\hspace{5cm}\ge \varepsilon \lfloor p/(2d+1)\rfloor-\frac{2}{\varepsilon n^2}\sum_{i=1}^p\sum_{k=1}^n (1-\E X_{ik}^2) \Bigg)\\
&\hspace{0.5cm}\le \P\Bigg(\sum_{l=1}^{\lfloor p/(2d+1)\rfloor }\bigg(\ind\left\{\tr\left(\left(\boldsymbol{\tilde S}_l^{(1)}-\boldsymbol S_l^{(1)}\right)^2\right)\ge (2d+1)\varepsilon\right\}\\
&\hspace{3cm}-\E \ind\left\{\tr\left(\left(\boldsymbol{\tilde S}_l^{(1)}-\boldsymbol{S}_l^{(1)}\right)^2\right)\ge (2d+1)\varepsilon\right\} \bigg)\ge \frac{\varepsilon}{2} \lfloor p/(2d+1)\rfloor\Bigg)\\
&\hspace{0.5cm}\le \exp\left(-\frac{\varepsilon^2p}{4(2d+1)}\right),
\end{align*}
and accordingly,
\begin{align*}
&\P\left(\sum_{l=1}^{\lfloor (p-d)/(2d+1)\rfloor }\ind\left\{\tr\left(\left(\boldsymbol{\tilde S}_l^{(2)}-\boldsymbol{S}_l^{(2)}\right)^2\right)\ge (2d+1)\varepsilon\right\}\ge \varepsilon \lfloor p/(2d+1)\rfloor \right)\\
&\hspace{8cm}\le \exp\left(-\frac{\varepsilon^2p}{4(2d+1)}\right).
\end{align*}
Combining Corollary \ref{corollary: A41} and Theorem \ref{theorem: A43} yields for $p$ sufficiently large,
\begin{align*}
d_L\Big(&F^{\boldsymbol S},F^{\boldsymbol{\tilde S}}\Big)\\
&\le \frac{4d}{p}+\frac{4d+2}{p}\sum_{l=1}^{\lfloor p/(2d+1)\rfloor }\ind\left\{\tr\left(\left(\boldsymbol{\tilde S}_l^{\boldsymbol A}-\boldsymbol S_l^{\boldsymbol{A}}\right)^2\right)\ge (2d+1)\varepsilon\right\}\\
&\hspace{0.5cm}+\frac{4d+2}{p}\sum_{l=1}^{\lfloor (p-d)/(2d+1)\rfloor }\ind\left\{\tr\left(\left(\boldsymbol{\tilde S}_l^{\boldsymbol B}-\boldsymbol{S}_l^{\boldsymbol B}\right)^2\right)\ge (2d+1)\varepsilon\right\}\\
&\hspace{0.5cm}+\Bigg[\frac{1}{p} \sum_{l=1}^{\lfloor p/(2d+1)\rfloor }\ind\left\{\tr\left(\left(\boldsymbol{\tilde S}_l^{\boldsymbol A}-\boldsymbol{S}_l^{\boldsymbol A}\right)^2\right)< (2d+1)\varepsilon\right\}\tr\left(\left(\boldsymbol{\tilde S}_l^{\boldsymbol A}-\boldsymbol S_l^{\boldsymbol{A}}\right)^2\right)\\
&\hspace{0.5cm}+\frac{1}{p} \sum_{l=1}^{\lfloor (p-d)/(2d+1)\rfloor }\ind\left\{\tr\left(\left(\boldsymbol{\tilde S}_l^{\boldsymbol B}-\boldsymbol{S}_l^{\boldsymbol B}\right)^2\right)< (2d+1)\varepsilon\right\}\\
&\hspace{6cm}\times\tr\left(\left(\boldsymbol{\tilde S}_l^{\boldsymbol B}-\boldsymbol S_l^{\boldsymbol{B}}\right)^2\right)\Bigg]^{1/3}\\
&\le 3\varepsilon + \sqrt[3]{3\varepsilon}
\end{align*}
with probability not larger than
\begin{align}
2\exp\left(-\frac{\varepsilon^2p}{4(2d+1)}\right) \label{eq: block pro}.
\end{align}
Note that the first term in the bound on $d_L(F^{\boldsymbol S},F^{\boldsymbol{\tilde S}})$ occurs by removing the rows and columns from $\boldsymbol S$ and $\boldsymbol 
{\tilde S}$ which are not covered by the block structures. The second and third term treat the blocks which are removed from $\boldsymbol S$ and $\boldsymbol 
{\tilde S}$  for irregularity. Finally, the last term bounds the L\'evy distance between the spectral measures of the reduced matrices. The terms \eqref{eq: block pro} are summable since
\begin{align*}
n\le \frac{p}{(\log p)^2}.
\end{align*}
As a consequence,
\begin{align*}
d_L\left(F^{\boldsymbol{S}},F^{\boldsymbol{\tilde S}}\right)\rightarrow 0
\end{align*}
almost surely as $p\to\infty$.
As before, redefine $\boldsymbol{\tilde S}$ by $\boldsymbol S$. It remains to rescale the diagonal entries of $\boldsymbol S$. Therefore, let $\boldsymbol{\tilde S}\in\R^{p\times p}$ have the same off-diagonal entries as $\boldsymbol S$ and 
$$\tilde S_{ii}=\frac{1}{n}\sum_{k=1}^n\tilde\sigma_{ik}^{-2}X_{ik}^2,~i=1,...,p.$$
Here, we may use similar arguments as for the rescaling of the off-diagonal entries but we choose $\alpha=1$ instead of $\alpha=2$ in Theorem \ref{theorem: A38}. By the Lidskii-Wielandt perturbation bound (1.2) in \cite{Li1999}, we have
\begin{align*}
\frac{1}{p}\sum_{i=1}^p\left|\lambda_i(\boldsymbol{\tilde S})-\lambda_i(\boldsymbol{S})\right|\le \frac{1}{p}\Arrowvert \boldsymbol{S}-\boldsymbol{\tilde S} \Arrowvert_{S_1}=\frac{1}{np}\sum_{i=1}^p\sum_{k=1}^n\left(\tilde\sigma_{ik}^{-2}-1\right)X_{ik}^2.
\end{align*}
Furthermore, for each $i=1,...,p$ holds
\begin{align*}
\sum_{k=1}^n\left(\tilde\sigma_{ik}^{-2}-1\right)\E X_{ik}^2\le \sum_{i=1}^p1- \E X_{ik}^2,
\end{align*}
and therefore by Markov's and Hoeffding's inequality together with \eqref{eq: mom} for $p$ sufficiently large,
\begin{align*}
&\P\left(\sum_{i=1}^p\ind\left\{\sum_{k=1}^n\left(\sigma_{ik}^{-2}-1\right)X_{ik}^2\ge \varepsilon n\right\}\ge \varepsilon p\right)\\
&\hspace{4cm}\le \P\Bigg(\sum_{i=1}^p\Bigg[\ind\left\{\sum_{k=1}^n\left(\sigma_{ik}^{-2}-1\right)X_{ik}^2\ge \varepsilon n\right\}\\
&\hspace{6cm}-\E\ind\left\{\sum_{k=1}^n\left(\sigma_{ik}^{-2}-1\right)X_{ik}^2\ge \varepsilon n\right\}\Bigg]\ge \frac{\varepsilon p}{2} \Bigg)\\
&\hspace{4cm}\le \exp\left(-\frac{\varepsilon^2p}{2}\right).
\end{align*}
Now, Theorem \ref{theorem: A38} and Theorem \ref{theorem: A43} yield
\begin{align*}
d_L(F^{\boldsymbol S},F^{\boldsymbol{\tilde S}})\le 2\varepsilon+\sqrt{\varepsilon}
\end{align*}
with probability
\begin{align*}
\exp\left(-\frac{\varepsilon^2p}{2}\right).
\end{align*}
Again, by the Borel-Cantelli lemma 
\begin{align*}
 d_L(F^S,F^{\tilde S})\longrightarrow 0
\end{align*} 
almost surely as $p\to\infty$.\\
Subsequently, we may assume that the matrix $\boldsymbol X$ has the following properties:
\begin{enumerate}
\item All entries $X_{ik}$ are centered.
\item All but $o(pn)$ entries are standardized and if an entry $X_{ik}$ is not standardized then $\E X_{ik}^2\le \eta_n$.
\item All entries of $\boldsymbol X$ are bounded by $\sqrt{\eta_nn}$, where $\eta_n\downarrow 0$ with $\eta_n\ge \frac{1}{\log n}$. 
\end{enumerate}
Finally, we replace the non-standardized entries of $\boldsymbol X$ by Rademacher variables. First, define $\boldsymbol{\tilde X}=(X_{ik}\ind\{\E X_{ik}^2=1\})_{ik}$. By an analogous line of reasoning as in the rescaling step follows
$$d_L\left(F^{\boldsymbol S},F^{\boldsymbol{\tilde S}}\right)\longrightarrow 0 \ \ \text{a.s. as $p\to \infty$},$$
where $$\boldsymbol{\tilde S}=\left(\frac{1}{n}\boldsymbol{\tilde X}\boldsymbol{\tilde X}'\right)\circ \boldsymbol 1_d.$$
Now, let $\boldsymbol{\hat X}\in\R^{p\times n}$ have the entries $\hat X_{ik}=X_{ik}\ind\{\E X_{ik}^2=1\}+\varepsilon_{ik}\ind\{\E X_{ik}^2<1\}$, where $\varepsilon_{ik},~i=1,\dots,p,~k=1,\dots,n,$ are independent Rademacher variables and independent of $\boldsymbol{X}$. Moreover, define 
\begin{align*}
\boldsymbol{\hat S}=\left(\frac{1}{n}\boldsymbol{\hat X}\boldsymbol{\hat X}'\right)\circ\boldsymbol1_d
\end{align*}
Again by Corollary \ref{corollary: A41},
\begin{align}
&d_L^3\left(F^{\boldsymbol{\hat S}},F^{\boldsymbol{\tilde S}}\right)\\
&\hspace{0.5cm}\le \frac{1}{d}\tr\left(\left(\boldsymbol{\hat S}-\boldsymbol{\tilde S}\right)^2\right)\\
&\hspace{0.5cm}= \frac{1}{pn^2}\sum_{i,j=1}^p\left(\sum_{k=1}^n \hat X_{ik}\hat X_{jk}-\tilde X_{ik}\tilde X_{jk} \right)^2\ind\{|i-j|\le d\}\notag\\
&\hspace{0.5cm}\le \frac{2}{pn^2}\sum_{i,j=1}^p\left(\sum_{k=1}^n \left(\hat X_{ik}-\tilde X_{ik}\right)\tilde X_{jk} \right)^2\ind\{|i-j|\le d\}\notag\\
&\hspace{4cm}+\left(\sum_{k=1}^n \left(\hat X_{jk}-\tilde X_{jk}\right)  \hat X_{ik}\right)^2\ind\{|i-j|\le d\}\notag\\
&\hspace{0.5cm}= \frac{2}{pn^2}\sum_{i=1}^p\left(\sum_{k=1}^n \left(\hat X_{ik}-\tilde X_{ik}\right)\hat X_{ik} \right)^2\label{eq: replace1}\\
&\hspace{1.5cm}+ \frac{2}{pn^2}\sum_{\substack{i,j=1\\i\neq j}}^p\left(\sum_{k=1}^n \left(\hat X_{ik}-\tilde X_{ik}\right)\tilde X_{jk} \right)^2\ind\{|i-j|\le d\}\label{eq: replace2}\\
&\hspace{4cm}+\left(\sum_{k=1}^n \left(\hat X_{jk}-\tilde X_{jk}\right)  \hat X_{ik}\right)^2\ind\{|i-j|\le d\}\label{eq: replace3}
\end{align}
For line \eqref{eq: replace1} we have
\begin{align*}
\eqref{eq: replace1}=\frac{2}{pn^2}\sum_{i=1}^p\left(\sum_{k=1}^n \ind\{\E X_{ik}^2<1\} \right)^2\le \frac{2}{pn}\sum_{i=1}^p\sum_{k=1}^n \ind\{\E X_{ik}^2<1\}\rightarrow 0.
\end{align*}
The terms $\eqref{eq: replace2}$ and $\eqref{eq: replace3}$ are handled the same way. Therefore, we just consider $\eqref{eq: replace2}$. Rewrite
\begin{align*}
\eqref{eq: replace2}&=\frac{2}{pn^2}\sum_{\substack{i,j=1\\i\neq j}}^p\sum_{k=1}^n\left(\hat X_{ik}-\tilde X_{ik}\right)^2 \E \tilde X_{jk}^2\ind\{|i-j|\le d\}\\
&\hspace{0.5cm}+\frac{2}{pn^2}\sum_{\substack{i,j=1\\i\neq j}}^p\sum_{k=1}^n\left(\hat X_{ik}-\tilde X_{ik}\right)^2 \left(\tilde X_{jk}^2-\E \tilde X_{jk}^2\right)\ind\{|i-j|\le d\}\\
&\hspace{0.5cm}+\frac{2}{pn^2}\sum_{\substack{i,j=1\\i\neq j}}^p\sum_{\substack{k_1,k_2=1\\k_1\neq k_2}}^n\left(\hat X_{ik_1}-\tilde X_{ik_1}\right)\tilde X_{jk_1}\left(\hat X_{ik_2}-\tilde X_{ik_2}\right)\tilde X_{jk_2}\ind\{|i-j|\le d\}
\end{align*}
Denote the first term by $I_1$, the second by $I_2$, and the third by $I_3$. $I_1$ vanishes asymptotically since
\begin{align*}
\frac{2}{pn^2}\sum_{\substack{i,j=1\\i\neq j}}^p\sum_{k=1}^n\left(\hat X_{ik}-\tilde X_{ik}\right)^2 \E \tilde X_{jk}^2\ind\{|i-j|\le d\}\le\frac{4d}{pn^2}\sum_{i=1}^p\sum_{k=1}^n\ind\{\E X_{ik}^2<1\}\rightarrow0.
\end{align*}
Let $Y_{ik}:=\tilde X_{jk}^2-\E \tilde X_{jk}^2$. As in inequality \eqref{eq: exp Y} we bound
\begin{align}
\E Y_{ik}^2\le 2\eta_n^{m-1}n^{m-1}.
\end{align}
Then we obtain for $I_2$,
\begin{align*}
\E I_2^4&\le \frac{2^8d^4}{p^4n^8}\sum_{j_1,j_2,j_3,j_4=1}^p\sum_{k_1,k_2,k_3,k_4=1}^n|\E Y_{j_1k_1}Y_{j_2k_2}Y_{j_3k_3}Y_{j_4k_4}|\\
&=\frac{2^8d^4}{p^4n^8}\sum_{j=1}^p\sum_{k=1}^n\E Y_{jk}^4+\frac{2^83d^4}{p^4n^8}\sum_{j=1}^p\sum_{\substack{k_1,k_2=1\\k_1\neq k_2}}^n \E Y_{jk_1}^2\E Y_{jk_2}^2\\
&\hspace{0.5cm}+\frac{2^83d^4}{p^4n^8}\sum_{\substack{j_1,j_2=1\\ j_1\neq j_2}}^p\sum_{k=1}^n\E Y_{j_1k}^2\E Y_{j_2k}^2+\frac{2^83d^4}{p^4n^8}\sum_{\substack{j_1,j_2=1\\ j_1\neq j_2}}^p\sum_{\substack{k_1,k_2=1\\ k_1\neq k_2}}^n\E Y_{j_1k_1}^2\E Y_{j_2k_2}^2\\
&\le  \frac{2^{9}d^4\eta_n^3}{p^3n^4}+\frac{2^{9}3d^4\eta_n^2}{p^3n^4}+\frac{2^83d^4\eta_n^2}{p^2n^5}+\frac{2^83d^4\eta_n^2}{p^2n^4},
\end{align*}
The last line is summable over $p$. Thus, by the Borel-Cantelli lemma, $I_2\to 0$ almost surely as $p\to\infty$. Now, consider $I_3$ and note that $\boldsymbol{\hat X}-\boldsymbol{\tilde X}$ and $\boldsymbol{\tilde X}$ are independent. Again, we evaluate the fourth moment
\begin{align*}
\E I_3^4&\le \frac{2^8d^4}{p^4n^8}\sum_{j=1}^p\sum_{\substack{k_1,\dots,k_8=1\\ k_{2l-1}\neq k_{2l}}}^n\left|\E \tilde X_{jk_1} \tilde X_{jk_2} \tilde X_{jk_3} \tilde X_{jk_4} \tilde X_{jk_5} \tilde X_{jk_6} \tilde X_{jk_7} \tilde X_{jk_8} \right|\\
&\hspace{0.5cm}+\frac{2^83d^4}{p^4n^8}\sum_{\substack{j_1,j_2=1 \\ j_1\neq j_2}}^p\sum_{\substack{k_1,\dots,k_8=1\\ k_{2l-1}\neq k_{2l}}}^n\left|\E \tilde X_{j_1k_1} \tilde X_{j_1k_2} \tilde X_{j_1k_3} \tilde X_{j_1k_4}\right| \left|\E\tilde X_{j_2k_5} \tilde X_{j_2k_6} \tilde X_{j_2k_7} \tilde X_{j_2k_8} \right|\\
&\le \frac{2^8d^4}{p^4n^8}\sum_{j=1}^p\sum_{\substack{k_1,\dots,k_8=1}}^n\left|\E \tilde X_{jk_1} \tilde X_{jk_2} \tilde X_{jk_3} \tilde X_{jk_4} \tilde X_{jk_5} \tilde X_{jk_6} \tilde X_{jk_7} \tilde X_{jk_8} \right|\\
&\hspace{0.5cm}+\frac{2^{10}3d^4}{p^4n^8}\sum_{\substack{j_1,j_2=1}}^p\sum_{\substack{k_1,k_2,k_3,k_4=1}}^n\E \tilde X_{j_1k_1}^2 \E\tilde X_{j_1k_2} ^2\E\tilde X_{j_2k_3}^2 \E\tilde X_{j_3k_4}^2\\
&\le  \frac{2^84140d^4}{p^3n^4}\left(1+\eta_n+\eta_n^2+\eta_n^3\right)+ \frac{2^{10}3d^4}{p^2n^4},
\end{align*}
where $4140$ is the $8$-th Bell number and gives the number of partitions of $\{1,\dots,8\}$. As for $I_2$, we obtain $I_3\rightarrow 0$ almost surely as $p\to\infty.$\\
In what follows, we may assume that the entries of $X$ are centered, standardized random variables bounded by $\eta_n\sqrt{n}$ for some decreasing sequence $(\eta_n)$ converging to 0.

\subsection{Almost sure convergence of moments}
We use the method of moments to prove the almost sure weak convergence of the sequence $F^{\boldsymbol S}$.  First we prove the convergence of the expected moments of $F^{\boldsymbol S}$. Let $l\in\N$ and $i_{l+1}:=i_1$, and define
$$m_{p,l}:= \int x^l \d F^{\boldsymbol S}(x).$$
Then, we conclude
\begin{align}
\E m_{p,l}&=\frac{1}{p}\E\tr\left(\boldsymbol S^l\right)\notag\\
&=\frac{1}{pn^l}\sum_{\substack{i_1,...,i_{l}=1\\ |i_j-i_{j+1}|\le d, \\j=1,...,l}}^p\sum_{k_1,...,k_{l}=1}^n\E\left(\prod_{j=1,...,l}X_{i_jk_j}X_{i_{j+1}k_{j}}\right)\label{eq: mom null}
\end{align} 
For a multi-index $(i_1,k_1,i_2,k_2,\dots,i_l,k_l,i_1)$, let $G=(V,E)$ be the graph with vertex set $V=\{i_1,\dots,i_l\}+\{k_1,\dots,k_l\}$, where the vertices $i_1,\dots,i_l$ are supposed to lie on the $I$-line and $k_1,\dots,k_l$ on the $K$-line, and edge set $$E=\{\{i_1,k_1\},\{k_1,i_2\},\dots,\{i_l,k_l\},\{k_l,i_1\}\}.$$ First note that
\begin{align*}
\E\left(\prod_{j=1,...,l}X_{i_jk_j}X_{i_{j+1}k_{j}}\right)=0
\end{align*}
if the walk $i_1,k_1,i_2,k_2,\dots,i_l,k_l,i_1$ does not cross each edge $e\in E$ at least twice. For $|E|\le l$, we have
\begin{align*}
\E\left(\prod_{j=1,...,l}X_{i_jk_j}X_{i_{j+1}k_{j}}\right)\le \eta_n^{2l-2|E|} n^{l-|E|},
\end{align*}
where equality holds for $|E|=l$. Since $G$ is connected, we conclude $ |V|-1\le|E|\le l$. This implies that only those indices $(i_1,\dots,k_l,i_1)$ contribute asymptotical to the sum \eqref{eq: mom null} for which $|V|-1=|E|=l$, and therefore the corresponding graphs $G$ need  to be trees. Hence, by Section \ref{section: combinatorial} it remains to consider the sum over canonical walks $i_1,\dots,k_l,i_1$ of $d$-banded ordered trees in $\mathcal{B}_{p,n,d,l+1}$. We conclude by Lemma \ref{lemma: banded trees},
\begin{align*}
&\lim_{p\to\infty}\E m_{p,l}\\
&\hspace{0.3cm}=\lim_{p\to\infty}\frac{1}{pn^l}|\mathcal{B}_{p,n,d,l+1}|\\
&\hspace{0.3cm}=\lim_{p\to\infty}\frac{1}{n^l}\sum_{G(i^c,k^c)}n^{l+1-|\{i^c_1,\dots,i^c_l\}|}\prod_{k^\ast}F(\deg(k^\ast)d,\deg(k^\ast),2d)\\
&\hspace{0.3cm}=\lim_{p\to\infty}\sum_{G(i^c,k^c)}n^{1-|\{i^c_1,\dots,i^c_l\}|}\prod_{k^\ast}\sum_{j=0}^{\deg(k^\ast)}(-1)^j\binom{\deg(k^\ast)}{j}\binom{\deg(k^\ast)d-2jd-1}{\deg(k^\ast)-1}\\
&\hspace{0.3cm}=\sum_{G(i^c,k^c)}\prod_{k^\ast}\sum_{j=0}^{\deg(k^\ast)}\ind\{k^\ast>2j\}(-1)^j(y(\deg(k^\ast-2j)))^{\deg(k^\ast)-1}\frac{\deg(k^\ast)}{j!(\deg(k^\ast)-j)!},
\end{align*}
where the outer sum runs over all canonical ordered trees $G(i^c,k^c)$ and the product runs over all vertices $k^\ast\in\{k_1^c,\dots,k_l^c\}$. Note that the cardinality of $\{k_1^c,\dots,k_l^c\}$ depends on the underlying canonical ordered tree and is given by $\max_{s=1,\dots,l}k_s^c$.\\
Lastly, we use once again the lemma of Borel-Cantelli to prove that $m_{p,l}-\E m_{p,l}\to 0$ almost surely as $p\to\infty$. Therefore, we evaluate the fourth moment of $m_{p,l}-\E m_{p,l}$. We follow the line of reasoning in \cite{Bai2010} on page 30 and 31. First, rewrite
\begin{align*}
\E \left(m_{p,l}-\E m_{p,l}\right)^4=p^{-4}n^{4l}\sum_{(i_j,k_j),~j=1,\dots,4}\E\prod_{j=1}^4\left(X[i_j,k_j]-\E X[i_j,k_j]\right),
\end{align*} 
where for any $j=1,2,3,4$, we denote $$k_j:=(k_{1,j},\dots,k_{1,j})\in [n]^l \ \ \text{and} \ \ i_j:=(i_{1,j},\dots,i_{l,j})\in [p]^l$$ such that $|i_{s,j}-i_{s+1,j}|\le d$ for $s=1,\dots,l$ with $i_{l+1,j}:=i_{1,j}$, and 
\begin{align*}
X[i_j,k_j]=\prod_{s=1}^l X_{i_{s,j}k_{s,j}}X_{i_{s+1,j}k_{s,j}}.
\end{align*}
Again, we assume the indices $i_{s,j}$ to lie on the $I$-line and $k_{s,j}$ on the $K$-line. Then for fixed $(i_j,k_j)$, $j=1,\dots,4$, define the graphs $G_j$ with vertex sets $$V_j:=\{i_{1,j},\dots,i_{l,j}\}+\{k_{1,j},\dots,k_{l,j}\}$$ and edge sets $$E_j:=\{\{i_{1,j},k_{1,j}\},\{i_{2,j},k_{1,j}\},\dots,\{i_{l,j},k_{l,j}\}\{i_{1,j},k_{l,j}\}\},$$ and $G$ with vertex set $V:=V_1\cup V_2\cup V_3\cup V_4$ and edge set $E:=E_1\cup E_2\cup E_3\cup E_4$. Now observe that
$$\E\prod_{j=1}^4\left(X[i_j,k_j]-\E X[i_j,k_j]\right)=0$$
if one of the graphs $G_j$ has no common edge with any of the other three, or if one edge $e\in E$ occurs only once in the sequence 
\begin{align*}
a:=&\{i_{1,1},k_{1,1}\},\{i_{2,1},k_{1,1}\},\dots,\{i_{l,1},k_{l,1}\},\{i_{1,1},k_{l,1}\},\\
&\{i_{1,2},k_{1,2}\},\{i_{2,2},k_{1,2}\},\dots,\{i_{l,2},k_{l,2}\},\{i_{1,2},k_{l,2}\},\\
&\{i_{1,3},k_{1,3}\},\{i_{2,3},k_{1,3}\},\dots,\{i_{l,3},k_{l,3}\},\{i_{1,1},k_{l,3}\},\\
&\{i_{1,4},k_{1,4}\},\{i_{2,4},k_{1,4}\},\dots,\{i_{l,4},k_{l,4}\},\{i_{1,4},k_{l,4}\}.
\end{align*}
We conclude that $G$ consists of at most two connected components, and each edge of a connected component occurs twice in $a$. In particular, $|V|\le |E|+2$. Denote the edges in $E$ by $e_1,\dots,e_{|E|}$ and by $\nu_1,\dots,\nu_{|E|}$ the corresponding multiplicities of the edges in the sequence $a$. Then,
\begin{align*}
p^{-4}n^{-4l}\left|\E \prod_{j=1}^4\left(X[i_j,k_j]-\E X[i_j,k_j]\right)\right|&\le 16p^{-4}n^{-4l}\eta_n^{\nu_1+\dots+\nu_{|E|}-2|E|}n^{\frac{\nu_1+\dots+\nu_{|E|}-2|E|}{2}}\\
&=16p^{-4}n^{-|E|}\eta_n^{8l-2|E|}.
\end{align*}
The number of indices $(i_j,k_j)$, $j=1,\dots,4$ such that the graph $G$ has at most two connected components, $|E|=s$, $s=1,\dots,2l$, and $|V|\le s+2$ is bounded by $C_lp^2n^s$, where the constant $C_l>0$ does only depend on $l$ and $\sup_p d/n<\infty$, and may be chosen uniformly over all $s=1,\dots,2l$. Alltogether,
\begin{align*}
\E\left(m_{p,l}-\E m_{p,l}\right)^4&=p^{-4}n^{4l}\sum_{(i_j,k_j),~j=1,\dots,4}\E\prod_{j=1}^4\left(X[i_j,k_j]-\E X[i_j,k_j]\right)\\&
\le 16\sum_{s\le 2l}C_lp^2n^sp^{-4}n^{-s}\eta_n^{8l-2|E|}\\ 
&\le 32lC_lp^{-2}.
\end{align*}
The last expression is summable over $p$, and therefore
$$m_{p,l}\longrightarrow m_l$$
almost surely as $p\to\infty$. 
\end{proof}

\bibliographystyle{imsart-nameyear}
\bibliography{reference}




%

%

%





\end{document}